\def\UseOption{Paper}
\newcommand{\bo}{\partial\Omega}
\newcommand{\Div}{\operatorname{Div}}
\newcommand{\Real}{\operatorname{Re}}
\newcommand{\Imag}{\operatorname{Im}}
\newcommand{\R}{\mathbb{R}}
\newcommand{\C}{\mathbb{C}}
\newcommand{\ii}{\textrm{i\,}}
\newcommand{\g}{\mathrm{g}}
\newcommand{\dd}{\mbox{d}}
\newcommand{\inc}{\tiny \mbox{inc}}
\newcommand{\refl}{\tiny \mbox{ref}}
\theoremstyle{Theorem}
\newtheorem{teor}{Theorem}
\theoremstyle{plain}
\newtheorem{corollary}{Corollary}
\newtheorem{lemma}{Lemma}
\newtheorem{prop}{Proposition}
\theoremstyle{definition}
\newtheorem{defi}{Definition}
\theoremstyle{remark}
\newtheorem{remark}{Remark}
\newcommand{\be}{\begin{equation}}
\newcommand{\ee}{\end{equation}}
\newcommand{\bnum}{\begin{enumerate}}
\newcommand{\enum}{\end{enumerate}}
\newcommand{\beq}{\begin{eqnarray}}
\newcommand{\eeq}{\end{eqnarray}}
\newcommand{\beqno}{\begin{eqnarray*}}
\newcommand{\eeqno}{\end{eqnarray*}}
\title{Stable determination of a simple metric, a covector field and a potential from the hyperbolic Dirichlet-to-Neumann map\\ \rule{14cm}{1pt}}
\author{Carlos Montalto \\Department of Mathematics \\ Purdue University}
\date{}
\begin{document}

\fancyhead{}
\fancyhead[C]{\scriptsize \leftmark}
\fancyfoot[C]{\thepage}

\renewcommand{\thefootnote}{\fnsymbol{footnote}}

\maketitle

\begin{abstract}
Let $({\cal M},\g)$ be a compact Riemmanian manifold with non-empty boundary. Consider the second order hyperbolic initial-boundary value problem
{\footnotesize 
\[
\left\lbrace 
\begin{array}{rclcr}
(\partial_t^2 + P(x,D))u  &= &0 & \mbox{in}&   (0,T)\times {\cal M},\\
u(0,x) = \partial_t u(0,x)   &=&  0 & \mbox{for}&  x\in {\cal M},\\
u(t,x)    &=&  f(t,x)& \mbox{on} & (0,T)\times \partial {\cal M},
\end{array}
\right.
\] }
where 
{\footnotesize
\[
P(x,D) = - \frac{1}{\sqrt{\det \g}}\left( - \frac{\partial}{\partial x^j} + \ii b_j\right)\g^{ij}\sqrt{\det \g}\left( - \frac{\partial}{\partial x^i} + \ii b_i\right) + q 
\]}
is a first-order perturbation of the Laplace operator $-\triangle_\g$ on $({\cal M},\g)$. Here  $b$ and $q$ are a covector field and a potential, respectively, in ${\cal M}$.
We prove H\"older type stability estimates near generic simple Riemannian metrics for the inverse problem of recovering simultaneously $\g$, b, and q from the hyperbolic Dirichlet-to-Neumann(DN) map associated,  $f\to \partial_\nu u - \ii\langle\nu,b\rangle_\g u|_{\partial {\cal M}\times [0,T]}$ modulo a class of transformations that fixed the hyperbolic DN map.
\end{abstract}
{\center
\tableofcontents
}
\bigskip
\noindent \textbf{Keywords:} Hyperbolic inverse boundary value problem, Hyperbolic Dirichlet to Neumann map, Stability estimates.\\
\noindent \textbf{MSC:} 35L20, 35R30.
\newpage

\section{Introduction}
Let $({\cal M},\g)$ be an oriented and compact Riemannian manifold with non-empty boundary $\partial {\cal M}$.  In this paper we consider the stability of the inverse problem of determining a Riemannian metric together with the lower order coefficients of the second order hyperbolic initial-boundary value problem (\ref{InitProb}), from the information that is encoded in the hyperbolic Dirichlet-to-Neumann(DN) map $\Lambda_\g$, defined in equation \eqref{DN}. We consider the following question: if two hyperbolic DN maps are close in an appropriate topology, how close are their Riemannian metrics, their covector fields and their potentials?

The physical interpretation of this problem is to use boundary measurements to find the speed and trajectory of the wave of propagation (encoded in $\g$) and additional physical properties (encoded in $b$ and $q$) inside a body. The zero initial conditions in $(\ref{InitProb})$ mean that the body is initially ``quiet'' and we use a perturbation $f$ to recovered information about its interior. This problems come naturally in many applications (e.g. geophysics). 

We start by presenting our notation and the mathematical formulation of the problem. We use the Einstein summation convention throughout the paper. Denote by $\triangle_\g$ the Laplace-Beltrami operator. We choose and arbitrary but fixed system of coordinates. In each local coordinate $\g(x) = (\g_{ij}(x))$ and 
\[
\triangle_\g u = \dfrac{1}{\sqrt{\det \g}} \dfrac{\partial}{\partial x^j} \left( \g^{ij} \sqrt{\det \g} \dfrac{\partial u}{\partial x^i} \right)  
\]  
where $(\g^{ij}) = (\g_{ij})^{-1}$. 

Any second order uniformly elliptic operator with real principal part can be written in local coordinates as 
\begin{equation} \label{OperLocalCoor}
P(x,D) = - \frac{1}{\sqrt{\det \g}}\left( - \frac{\partial}{\partial x^j} + \ii b_j\right)\g^{ij}\sqrt{\det \g}\left( - \frac{\partial}{\partial x^i} + \ii b_i\right) + q 
\end{equation}
where $b$ is a complex-valued covector field on ${\cal M}$ and $q$ a complex-valued function on ${\cal M}$. Moreover, it is self-adjoint w.r.t. $L^2 := L^2({\cal M}, dV_\g)$  if and only if $b$ and $q$ are real valued, see next section for details.

For  such  operator $P(x,D)$,  consider the problem 
\begin{equation}\label{InitProb}
\left\lbrace 
\begin{array}{rclcr}
(\partial_t^2 + P(x,D))u  &= &0 & \mbox{in}&   (0,T)\times {\cal M},\\
u(0,x) = \partial_t u(0,x)   &=&  0 & \mbox{for}&  x\in {\cal M},\\
u(t,x)    &=&  f(t,x)& \mbox{on} & (0,T)\times \partial {\cal M},
\end{array}
\right.
\end{equation}
where $f\in C_{0}^1(\R_+ \times \partial {\cal M})$. Denote by $ \nu = \nu(x)$ the outer unit conormal to $\partial {\cal M}$ at $x \in \partial {\cal M}$, normalized so that $\g^{ij}\nu_i\nu_j=1$. 
The hyperbolic DN map $\Lambda_P:H^1_0 ([0,T] \times \partial {\cal M} ) \to L^2([0,T] \times \partial {\cal M})$ is defined by 
\be 
\Lambda_{P}(f) := \left.\frac{\partial u}{\partial \nu}- \ii\langle\nu,b\rangle_gu \right|_{(0,T)\times \partial {\cal M}} = 
\left. \sum_{i=1}^{n}\nu^i \dfrac{\partial u}{\partial x^i}   - 
\ii  \nu^i b_i u \; \right|_{(0,T)\times \partial {\cal M}}, \label{DN}
\ee
where $\nu^i = \g^{ij}\nu_j$.
This DN map is invariant under the group of transformations
\begin{equation} \label{CoeficientsTransformation}
\g \mapsto \g_*:= \varphi^*\g,\hspace{1cm} b \mapsto b_*:= \varphi^*b - \mbox{d}\theta \hspace{.5cm} \mbox{ and } \hspace{.5cm} q \mapsto q_* := \varphi^* q,
\end{equation}
where $\varphi:{\cal M} \to {\cal M}$ is a diffeomorphism with $\varphi|_{\partial {\cal M}} =$Id and $\theta \in C^\infty({\cal M},\R)$ with $\theta|_{\partial {\cal M}} = 0$, see Section \ref{subsec:GaugeTransf} for details. 

The inverse problem is therefore formulated in the following way: knowing $\Lambda_P$,
\begin{enumerate}
\item Can one determine the metric $\g$, the covector field $b$ and the potential $q$ up to a transformation that acts on this coefficients by (\ref{CoeficientsTransformation})? 
\item Could we do this recovery in a stable way?
\end{enumerate} 

The problem of uniqueness was addressed in several papers. When $b=0$, the first question was solved in the Euclidean case ($\g=e$) by Rakesh and Symes \cite{MR914815} using the result of Sylvester and Uhlmann \cite{sylvester1987global}. For the non Euclidean case, the spectral analogue of the inverse boundary value problem was solved for the case $b=q=0$ by Belishev and Kurylev \cite{MR1177292} using the boundary control method introduced by Belishev \cite{MR924687, MR1474359} (see also \cite{MR1399197}). This proof uses in a very essential way a unique continuation principle proven by Tataru \cite{tataru1995unique}. Because of the latter, it is unlikely that this method would prove H\"older type of stability estimates even under geometric and topological restrictions. In \cite{MR1926923}, Kurylev and Lassas gave a positive answer to the question of uniqueness. They also uniquely recovered the damping coefficient from the hyperbolic DN-map. Their result works in more general geometric conditions than generic simple metrics. They assumed the Bardos-Lebeau-Rauch controllability condition for the non-self adjoint case. Their approach uses a geometrical formulation (see \cite{MR1484000}) of the boundary control method introduced in \cite{MR924687} and hence, it is unclear how to get stability from their approach.  The spectral analogue of this problem as well as some modifications were considered in \cite{Nachman-Sylvester-Uhlmann-1988, MR922775, MR1666879, MR2372486, shiota1985inverse, MR2193218, MR1153550, KatchalovKurylevLassas2001}.

For the problem of stability there are only partial answers and is currently addressed by many authors. Conditional type of estimates are typical for such kind of inverse problems. We assume an apriori compactness type of condition of boundedness of the $C^k$ norm of the metric, the covector field and the potential for some large $k$. This condition guarantee continuity of the inverse map without control of the modulus of continuity. Weak geometric conditions of this type were studied in \cite{MR2096795}. 

For the Euclidean case and when $b=0$, Sun \cite{MR1059582} proved continuity of the inverse problem. Also for the Euclidean case, Isakov and Sun \cite{MR1158175} proved logarithmic stability for 2 dimensional domains and H\"older stability in 3 dimensions. For the anisotropic case and when $b=q=0$, Stefanov and Uhlmann in \cite{stefanov1998stability} proved conditional H\"older stability for metrics close to the Euclidean in a $C^k$ norm, later they extended their result of H\"older stability near generic simple metris in \cite{stefanov2005stable}. For the case $b=0$, Dos Santos and Bellassoued \cite{MR2737744} proved H\"older stability for the potential $q$ when $\g_0$ is a fixed simple metric, they also recover in a stable way the conormal factor of the metric $\g_0$ within the conormal class when $q=0$. 

In this paper we prove H\"older type of conditional stability when recovering: the metric $\g$, the covector field $b$ and the potential $q$; all at the same time, under the hypothesis that the metrics are near a generic simple metric. Within the process we explicitly  recover the X-ray transform along geodesics of the covector field $b$ and the potential $q$. This work was influenced from the important breakthrough paper on boundary rigidity of Stefanov and Uhlmann \cite{stefanov2005boundary} and their later work on describing the connection of boundary rigidity with the inverse problem of recovering the metric from the hyperbolic DN-map \cite{stefanov2005stable-1}. Here we present a generalization of their ideas in combination with stability estimates for the geodesic X-ray transform that were obtained on \cite{stefanov2004stability}.


\subsection*{Main result}
\addcontentsline{toc}{subsection}{Main Result}

A Riemannian manifold $({\cal M},\g)$ is simple if $\partial {\cal M}$ is strictly convex and any two point in ${\cal M}$ can be connected by a single minimizing geodesic depending smoothly on them, see Definition \ref{SimpleMetric} for more details. Since simple manifolds are diffeomorphic to the unit ball in the Euclidean space, from now on, without loss of generality, we consider the case that ${\cal M} = \overline{\Omega}$, where $\Omega$ is diffeomorphic to a ball in the Euclidean space with smooth boundary. There exist a dense open subset ${\cal G}^k(\Omega)$  of simple metrics in $C^k(\bar{\Omega})$ for $k\gg 1$, that consists of those metrics for which the X-ray transform is s-injective and stable, see Definition \ref{GenericSimpleMetrics}. In particular, this set ${\cal G}^k(\Omega)$ contains all real analytic simple metrics in $\Omega$ and also all metrics with small enough bound on the curvature (e.g. negative curvature) \cite{stefanov2005boundary}.

The main result of this paper reads as follows. Let $P$ be the operator related to  the metric $\g$, covector field $b$ and potential $q$ as in (\ref{OperLocalCoor}), similarly let $\tilde{P}$ be the operator related to $\tilde{\g}$, $\tilde{b}$ and $\tilde{q}$. Suppose that we consider the initial boundary problem (\ref{InitProb}) for both operators. 

\begin{teor} \label{MainTheorem}
There exist $k\gg 1$ and $0 < \mu <1$, such that for any $\g_0 \in {\cal G}^k(\Omega)$ there exist $\epsilon_0>0$ such that if
\begin{align}  \label{AprioriCond}
  \begin{split} 
    ||\g - \g_0||_{C(\bar{\Omega})} + ||\tilde{\g} - \g_0||_{C(\bar{\Omega})} +  ||b - \tilde{b}||_{C(\bar{\Omega})}  < \epsilon_0 \ \\
    and \hspace{4cm}\\
    \g, \tilde{\g}, b, \tilde{b}, q, \tilde{q} \mbox{ are bounded } C^k(\bar{\Omega}),\hspace{1.5cm}
  \end{split}
\end{align}
then the following holds. For any $T>\mbox{diam}_{\g_0}(\Omega)$ and $0<\epsilon \leq T$,  there exist $C>0$ and a $C^3(\bar{\Omega})$ diffeomorphism $\varphi$ and smooth function $\theta$ as in (\ref{CoeficientsTransformation}), such that  
\begin{align}
\begin{split}
||\g - \tilde{\g}_*  ||_{C^2(\bar{\Omega})}  + ||b - \tilde{b}_*  ||_{C^1(\bar{\Omega})} + ||q - \tilde{q}_*  ||_{C(\bar{\Omega})}  &\\
& \hspace{-4cm}\leq  C || \Lambda_{P} - \Lambda_{\tilde{P}}||^\mu_{H_0^1([0,\epsilon]\times \partial \Omega) \to L^2([0,T]\times \partial \Omega)}.
\end{split}
\end{align}

\end{teor}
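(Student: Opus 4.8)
The plan is to reduce the inverse problem to the s\nobreakdash-injectivity and stability of geodesic X\nobreakdash-ray transforms acting on functions, $1$\nobreakdash-forms and symmetric $2$\nobreakdash-tensors near $g_0$ — which are available precisely because $g_0\in{\cal G}^k(\Omega)$, and which persist on a $C^k$-neighbourhood of $g_0$ by \cite{stefanov2005stable} — and then to extract these transforms from $\Lambda_P-\Lambda_{\tilde P}$ through a geometric optics (WKB) construction combined with a pseudolinearization identity. The scheme follows the Stefanov--Uhlmann strategy for the case $b=q=0$, with the new feature that the three unknowns $g$, $b$, $q$ must be recovered together.

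\textbf{Step 1 (boundary determination and gauge fixing).} The hyperbolic DN map, localized near $\partial\Omega$, acts as a $\Psi$DO whose full symbol encodes the Taylor expansions of $g$, $b$ and $q$ at $\partial\Omega$; these jets are therefore determined, stably, by $\Lambda_P$ restricted to $[0,\epsilon]\times\partial\Omega$. Using this together with the transformation group \eqref{CoeficientsTransformation}, and an implicit-function-theorem argument whose smallness hypothesis is exactly \eqref{AprioriCond}, one constructs a $C^3$ diffeomorphism $\varphi$ (close to the identity, equal to $\mathrm{Id}$ on $\partial\Omega$) and a gauge function $\theta$ with controlled $C^k$ bounds so that $g-\tilde g_*$, $b-\tilde b_*$, $q-\tilde q_*$ vanish to infinite order on $\partial\Omega$ — hence extend by $0$ off $\Omega$ without loss of regularity — and so that, in addition, $g-\tilde g_*$ is solenoidal with respect to $g_0$, thereby pinning down the residual diffeomorphism gauge.

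\textbf{Step 2 (pseudolinearization and geometric optics).} If $u$ solves \eqref{InitProb} for $P$ with data $f$ and $\tilde u$ solves it for $\tilde P$ with the same $f$, then $w=u-\tilde u$ solves $(\partial_t^2+\tilde P)w=-(P-\tilde P)u$ with zero Cauchy and zero lateral data; pairing with a solution $v$ of the backward problem for $\tilde P^{*}$ gives
\[
\big\langle(\Lambda_P-\Lambda_{\tilde P})f,\ \bar v|_{\partial\Omega\times[0,T]}\big\rangle_{L^2}
=\int_0^T\!\!\int_\Omega\big((P-\tilde P)u\big)\,\bar v\,dV_g\,dt \;+\;(\text{boundary terms in }b-\tilde b_*).
\]
Choose $u$ and $v$ to be WKB solutions of the form $a\,e^{\ii\lambda\phi}$ concentrating, as $\lambda\to\infty$, along a fixed geodesic $\gamma$ of $\tilde g$ issued from $\partial\Omega$; finite speed of propagation lets the data be supported in $[0,\epsilon]\times\partial\Omega$ while the reflected energy is recorded before time $T>\mathrm{diam}_{g_0}(\Omega)$. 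The a priori bound in $C^k(\bar\Omega)$, $k\gg1$, is what permits carrying the WKB expansion to high order with remainders estimated in the operator-norm framework $H^1_0\to L^2$. Expanding the right-hand side asymptotically produces a polynomial in $\lambda$,
\[
\lambda^{2}\,I^{(2)}_{\tilde g}(g-\tilde g_*)(\gamma)\;+\;\lambda\,I^{(1)}_{\tilde g}(b-\tilde b_*)(\gamma)\;+\;I^{(0)}_{\tilde g}(q-\tilde q_*)(\gamma)\;+\;(\text{lower order in }\lambda),
\]
where $I^{(2)},I^{(1)},I^{(0)}$ are the (weighted) X\nobreakdash-ray transforms of the symmetric $2$\nobreakdash-tensor $g-\tilde g_*$, the $1$\nobreakdash-form $b-\tilde b_*$, and the function $q-\tilde q_*$ along $\gamma$, with the amplitudes $a$ chosen so that these transforms appear with their standard, invertible symbols.

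\textbf{Step 3 (extraction in order, interpolation, conclusion).} Since $\|f\|_{H^1_0([0,\epsilon]\times\partial\Omega)}\lesssim\lambda$ and $\|\bar v|_{\partial\Omega\times[0,T]}\|_{L^2}\lesssim1$, the left-hand side is $O(\lambda\,\|\Lambda_P-\Lambda_{\tilde P}\|)$, so the top coefficient obeys
\[
\big|I^{(2)}_{\tilde g}(g-\tilde g_*)(\gamma)\big|\ \lesssim\ \lambda^{-1}\|\Lambda_P-\Lambda_{\tilde P}\|+\lambda^{-1}\big(|I^{(1)}|+\cdots\big)+(\text{WKB remainder}),
\]
and balancing $\lambda$ against the WKB error (controlled by the $C^k$ bound) bounds the tensor X\nobreakdash-ray transform of $g-\tilde g_*$ by a positive power of $\|\Lambda_P-\Lambda_{\tilde P}\|$. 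Because $\tilde g$ is $C(\bar\Omega)$-close to $g_0\in{\cal G}^k(\Omega)$, the solenoidal stability estimate for the tensor X\nobreakdash-ray transform on a $C^k$-neighbourhood of $g_0$ combined with the gauge choice of Step~1 gives an $H^{s}$ estimate for $g-\tilde g_*$, and interpolation with the a priori $C^k$ bound upgrades this to $\|g-\tilde g_*\|_{C^2(\bar\Omega)}\le C\|\Lambda_P-\Lambda_{\tilde P}\|^{\mu}$. Substituting this back, the $\lambda^{1}$ coefficient now controls $I^{(1)}_{\tilde g}(b-\tilde b_*)$; s\nobreakdash-injectivity and stability of the $1$\nobreakdash-form X\nobreakdash-ray transform near $g_0$, plus interpolation, yield the $C^1$ estimate for $b-\tilde b_*$; one further iteration at order $\lambda^0$ yields the $C$ estimate for $q-\tilde q_*$. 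Optimizing the free parameter $\lambda$ throughout fixes the common Hölder exponent $\mu\in(0,1)$.

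\textbf{Main obstacle.} The crux is the simultaneous occurrence of the three unknowns at consecutive powers of $\lambda$: the coupling terms (derivatives of $g-\tilde g_*$ at order $\lambda$, derivatives of $b-\tilde b_*$ at order $\lambda^0$) must be absorbed, which forces the ``recover in decreasing order, then substitute'' bootstrap and a careful design of the WKB amplitudes. Tightly linked to this is that the whole X\nobreakdash-ray-transform apparatus must be run for the nearby metric $\tilde g$ rather than for $g_0$ itself, so one needs the s\nobreakdash-injectivity and stability estimates to hold uniformly on a $C^k$-neighbourhood of $g_0$ and must compare geodesics of $\tilde g$, $g$ and $g_0$ quantitatively. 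The remaining points — rigorous WKB remainder bounds in the $H^1_0\to L^2$ norm over the short interval $[0,\epsilon]$, and the $C^3$ construction of $\varphi$ — are routine consequences of the a priori compactness hypothesis.
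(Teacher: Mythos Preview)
Your outline is coherent and would in principle lead to a proof, but it follows a genuinely different route from the paper's in two places, and one of your steps hides a real difficulty.

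\textbf{Where the paper differs.} For the metric, the paper does \emph{not} extract $I^{(2)}_{\tilde g}(g-\tilde g_*)$ from a pseudolinearization identity. Instead it builds a single WKB solution with phase $\phi(x)=\rho_g(x,z_0)$ in global semigeodesic coordinates, traces where the wave exits $\partial\Omega$, and from this recovers the \emph{boundary distance function} $\rho_g|_{\partial\Omega\times\partial\Omega}$ stably from $\Lambda_P-\Lambda_{\tilde P}$; the $C^2$ estimate for the metric then comes from the nonlinear boundary-rigidity stability theorem of \cite{stefanov2005boundary}, used as a black box. For $b$ and $q$ there is again no pairing integral: the transport equations for $A_0$ and $A_1$ are solved \emph{explicitly} in the same semigeodesic coordinates, yielding $A_0=(\det g)^{-1/4}\exp\big(\ii\!\int b_n\,ds\big)$ and an analogous formula for $A_1$ containing $\int q\,ds$; comparing $A_0,\tilde A_0$ and $A_1,\tilde A_1$ on the exit set directly produces $I_g(b-\tilde b)$ and $I_g(q-\tilde q)$, after which the X-ray stability estimates of \cite{stefanov2004stability} are applied. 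The gauge fixing is equally concrete: $\varphi$ is built from two boundary-normal-coordinate maps and $\theta(x',x^n)=\int_0^{x^n}(\varphi^*\tilde b_n-b_n)\,dy$; no solenoidal normalization is used anywhere.

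\textbf{A gap in your Step 1.} You assert that $\varphi$ can be chosen so that $g-\tilde g_*$ is solenoidal with respect to $g_0$ \emph{and} flat to infinite order at $\partial\Omega$. These are two different normalizations, and arranging them simultaneously is a nonlinear problem (it is the content of a separate theorem in \cite{stefanov2005boundary}, proved by an implicit-function argument); it cannot be dismissed as routine. More seriously, your pseudolinearization for the metric hides the fact that $u$ is a WKB solution for $g$ while $v$ is one for $\tilde g$, so the two packets concentrate on geodesics of \emph{different} metrics and the phases do not cancel to give a clean line integral at order $\lambda^2$; handling this requires either an a priori metric estimate (circular) or a more careful comparison of the two geodesic flows. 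The paper sidesteps both issues by going through the boundary distance function instead. Your scheme for $b$ and $q$, once the metric is controlled, is essentially equivalent to the paper's, just packaged via an integral identity rather than a direct comparison of amplitudes.
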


\begin{remark}\label{FixCoordinateForCkNorms}
All $C^k(\Omega)$ norms are related to an arbitrary but fixed choice of coordinates.
\end{remark} 

The idea of the proof is to divide the recovery in three parts: First, we prove stability at the boundary using asymptotic solutions pointing in different directions to divide the information. Second, we recover the boundary distance function from the data and use boundary rigidity estimates in \cite{stefanov2005boundary} to obtain stability for the metric. Third, we translate the problem of recovering $b$ and $q$ to an X-ray type of problem.  We can explicitly recover the X-ray transform along geodesics of $b$ and $q$ from the data. We then use estimates obtained in \cite{stefanov2004stability} to get the desired stability. In each step we must separate the information to get estimates for $\g$, $b$ and $q$ separately.

A corollary of the proof is obtained if we restrict the inverse problem to a conformal class of metrics to $\g$ (i.e. $c\g$ where $c(x)$ is the conformal factor) and the instance when the covector field $b=0$. In that case, there is no gauge invariance in the information encoded in the DN$-$map and hence  we can work with general simple metrics, i.e. we can drop the generic assumption. Boundary stability in a conformal class is obtained from  \cite{Stefanov-Uhlmann-Vasy}. We then use \cite{bernshtein1980conditions} instead of \cite{stefanov2005boundary} to prove interior stability and obtain the following result that, in particular, generalizes \cite{MR2737744}.

\begin{corollary} Let $c_0\g$ be simple. Consider  $P = \triangle_{cg} + q$ and $\tilde{P} = \triangle_{\tilde{c}\g} + \tilde{q}$. There exist $k \gg 1$, $0<\mu<1$ and $\epsilon_0 >0$ such that if
\begin{equation}  \nonumber
      ||c - c_0||_{C(\bar{\Omega})}  + ||\tilde{c} - c_0||_{C(\bar{\Omega})} < \epsilon_0  \quad \mbox{ and } \quad
    c, \tilde{c},  q, \tilde{q} \mbox{ are bounded in } C^k(\bar{\Omega}),
\end{equation}
then for any $T>\mbox{diam}_{c_0\g_0}(\Omega)$ and any  $0<\epsilon \leq T$, there exist $C>0$ such that
\begin{equation} \nonumber
||c - \tilde{c}  ||_{C^2(\bar{\Omega})}  + ||q - \tilde{q}  ||_{C(\bar{\Omega})}  \leq  C || \Lambda_{P} - \Lambda_{\tilde{P}}||^\mu_{H_0^1([0,\epsilon]\times \partial \Omega) \to L^2([0,T]\times \partial \Omega)}.
\end{equation}

\par

\end{corollary} 

Acknowledgements: I would like to thank Plamen Stefanov for suggesting the problem and to An Fu for providing his notes on boundary determination.

\section{Preliminaries} 

Let us first state some mapping properties for the initial boundary value problem (\ref{InitProb}) and some remarks about self adjointness. Any second order uniformly elliptic operator with real principal part can be written as 
\be \label{SecndOrderHyperOper}
P = - \triangle_\g + B + Q
\ee
where $B$ is a complex-valued vector field which in local coordinates has the form $B = B^j \partial/\partial x^j$ and $Q$ is a complex-valued function on $\Omega$. An straight calculation shows that $P$ is self adjoint with respect to $L^2(\Omega, dV_\g)$ if and only if
\begin{equation}\label{ConditionForSelfAdjointness}
\Real{B} = 0 \mbox{ and } \Div{B} = \ii 2\Imag{Q}.
\end{equation}
If we write (\ref{SecndOrderHyperOper}) as in (\ref{OperLocalCoor}) then we get that 
\be\label{CoeficientsTransformationsForSeflfadjointness}
B = \ii 2g^{-1}b  \mbox{ and } Q = q+ |b|_\g + \ii   \Div \g^{-1}b.
\ee
where $(\g^{-1}b)^j = \g^{ij}b_i$. We then see that by (\ref{ConditionForSelfAdjointness}) the operator (\ref{OperLocalCoor}) is self adjoint w.r.t $L^2$ if and only if $b$ and $q$ are real valued.

Following \cite{KatchalovKurylevLassas2001} and \cite{MR2193218}, we have that the direct problem (\ref{InitProb}) has the following mapping properties:   
\begin{lemma} \label{DirectProblem}
Let 
\[
F \in L^1([0,T];L^2(\Omega)) \mbox{ and } f \in H^1_0([0,T]\times \partial \Omega).
\]
Then there is a unique solution $u(t,x)$ of the problem
\be \label{InitValProbSource}
\left\lbrace 
\begin{array}{rclcr}
(\partial_t^2 + P(x,D))u  &= & F(t,x) & \mbox{in}&   (0,T)\times \Omega,\\
u(0,x) = \partial_t u(0,x)   &=&  0 & \mbox{for}&  x\in \Omega,\\
u(t,x)    &=&  f(t,x)& \mbox{on} & (0,T)\times \partial \Omega,
\end{array}
\right.
\ee
such that
\[
u(t,x) \in C([0,T];H^1(\Omega)) \cap C^1([0,T];L^2(\Omega)) 
\]
and 

\[
\max_{0\leq t\leq T} \{||u(t)||_{H^1(\Omega)} + ||u_t(t)||_{L^2(\Omega)}\} \leq c(T)  \{\int_0^T ||F(t,\cdot)||_{L^2(\Omega)} dt+ ||f||_{H^1_0([0,T]\times \partial \Omega)}\}. 
\]
Moreover $\Lambda_P f  \in L^2([0,T] \times \partial \Omega)$ and
\[
|| \Lambda_P f ||_{L^2([0,T] \times \partial \Omega)} \leq c(T)  \{\int_0^T ||F(t,\cdot)||_{L^2(\Omega)} dt+ ||f||_{H^1_0([0,T]\times \partial \Omega)}\} .
\]
\end{lemma}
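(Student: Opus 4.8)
The plan is to prove the a priori energy and boundary-trace estimates for smooth solutions, then to obtain existence and the stated regularity by approximation; uniqueness follows from the energy identity. Throughout we use the form $P=-\triangle_g+B+Q$ of \eqref{SecndOrderHyperOper}, where $B$ is of first order and $Q$ of order zero, so every term below involving $B$ or $Q$ is of lower order and is absorbed by Gronwall's inequality.

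\emph{Step 1: a priori estimates.} Let $u$ be a smooth solution of \eqref{InitValProbSource} with smooth $F$ and with $f\in C_0^\infty((0,T)\times\partial M)$, and set $E(t)^2:=\|u(t)\|_{H^1(M)}^2+\|\partial_t u(t)\|_{L^2(M)}^2$. Multiplying the equation by $\partial_t\bar u$ and integrating over $M$ gives the usual energy identity for $\tfrac{d}{dt}\int_M(|\partial_t u|^2+\langle\nabla u,\nabla\bar u\rangle_g)\,dV_g$, whose only boundary contribution is $\Real\int_{\partial M}\partial_\nu u\,\partial_t\bar u\,dS_g$; integrating in $t$ and using $u(0)=\partial_t u(0)=0$ one obtains, for $T'\in[0,T]$,
\[
E(T')^2 \ \le\ c\!\int_0^{T'}\!E(t)^2\,dt\ +\ c\,\sup_{[0,T']}E\,\Big(\int_0^T\!\|F(t)\|_{L^2}\,dt\Big)\ +\ c\,\|\partial_\nu u\|_{L^2((0,T')\times\partial M)}\,\|f\|_{H^1_0((0,T)\times\partial M)}.
\]
To control the last term one proves the trace (``hidden regularity'') estimate by a Rellich-type multiplier argument: choosing a smooth vector field $N$ on $M$ with $N|_{\partial M}=\nu$, multiplying the equation by $\overline{N\!\cdot\!\nabla u}$ and integrating by parts over $(0,T')\times M$, and using $\nabla u|_{\partial M}=(\partial_\nu u)\,\nu+\nabla_{\partial M}f$, the boundary integral reduces---up to terms bounded by $E$, by $\int_0^T\|F\|_{L^2}dt$ and by $\|f\|_{H^1_0((0,T)\times\partial M)}$---to a nonzero multiple of $\int_{(0,T')\times\partial M}|\partial_\nu u|^2\,dS_g\,dt$, so that
\[
\|\partial_\nu u\|_{L^2((0,T')\times\partial M)}^2 \ \le\ c\Big(\sup_{[0,T']}E^2\ +\ \big(\textstyle\int_0^T\|F\|_{L^2}dt\big)^{2}\ +\ \|f\|_{H^1_0((0,T)\times\partial M)}^2\Big).
\]
Feeding this into the previous inequality, absorbing the resulting $\sup E$ terms by Young's inequality (choosing the small parameter after the constant in the trace estimate), and applying Gronwall's inequality yields
\[
\sup_{[0,T]}E\ +\ \|\partial_\nu u\|_{L^2((0,T)\times\partial M)}\ \le\ c(T)\Big(\int_0^T\!\|F(t)\|_{L^2}\,dt\ +\ \|f\|_{H^1_0((0,T)\times\partial M)}\Big).
\]

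\emph{Step 2: existence, uniqueness and the DN map.} Since $H^1_0([0,T]\times\partial M)$ is the closure of $C_0^\infty((0,T)\times\partial M)$ and $L^1([0,T];L^2(M))$ contains a dense set of smooth functions, pick $f_k\in C_0^\infty((0,T)\times\partial M)$ with $f_k\to f$ in $H^1_0$ and $F_k\to F$ in $L^1([0,T];L^2(M))$ with $F_k$ smooth and vanishing near $t=0$. Extend $f_k$ to $Rf_k\in C^\infty([0,T]\times M)$ vanishing near $t=0$ and $t=T$ with $Rf_k|_{(0,T)\times\partial M}=f_k$ (e.g.\ via a boundary collar and a cutoff); the homogeneous-boundary problem for $v_k$ with the smooth source $F_k-(\partial_t^2+P)Rf_k$ and zero initial data is solved by classical hyperbolic theory, and $u_k:=v_k+Rf_k$ is a smooth solution of \eqref{InitValProbSource} with data $(F_k,f_k)$. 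Applying Step~1 to $u_k-u_\ell$ shows $(u_k)$ is Cauchy in $C([0,T];H^1(M))\cap C^1([0,T];L^2(M))$ and $(\partial_\nu u_k)$ is Cauchy in $L^2((0,T)\times\partial M)$; the limit $u$ solves \eqref{InitValProbSource}, has the required regularity, and obeys the energy estimate. Because $u|_{(0,T)\times\partial M}=f\in C([0,T];H^{1/2}(\partial M))\subset L^2((0,T)\times\partial M)$, the map $\Lambda f=\partial_\nu u-\ii\langle\nu,b\rangle_g u|_{(0,T)\times\partial M}$ lies in $L^2((0,T)\times\partial M)$ and satisfies the stated bound. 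Uniqueness: for a solution with $F=0$ and $f=0$ the energy identity (valid for the weak solution after the usual regularization) has vanishing source and vanishing boundary term, so $\tfrac{d}{dt}E(t)^2\le cE(t)^2$ with $E(0)=0$, hence $u\equiv 0$.

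\emph{Main obstacle.} The only non-routine ingredient is the trace estimate in Step~1: the multiplier computation must be carried out on a curved manifold with non-flat boundary and with the first-order term $B$ present, and one must verify that the boundary integral is genuinely coercive in $\partial_\nu u$ while every other term is dominated by the energy, by $\int_0^T\|F\|_{L^2}$, and by $\|f\|_{H^1_0}$. Once this is in hand the remaining steps are standard; this is essentially the content of \cite{MR867669}, see also \cite{MR2193218} and \cite{kachalov2001inverse}.
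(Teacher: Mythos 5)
The paper does not prove this lemma; it simply cites \cite{MR867669}, \cite{MR2193218} and \cite{kachalov2001inverse}, and your outline is precisely the standard argument from those references: the energy identity combined with the Lasiecka--Lions--Triggiani ``hidden regularity'' trace estimate for $\partial_\nu u$ obtained by the Rellich multiplier $N\cdot\nabla u$, followed by density and a Gronwall/absorption step. Your sketch is correct, and you rightly single out the multiplier trace estimate as the one non-routine ingredient, so there is nothing to add beyond what the cited works supply.
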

To simplify notation we denote from now on, 
\[
||\cdot ||_* = ||\cdot ||_{H_0^1([0,\epsilon]\times \partial \Omega) \to L^2([0,T]\times \partial \Omega)}.
\] 
where $0< \epsilon<T$ and $T>\mbox{diam}_{\g_0}(\Omega)$. More precisely, if $\Lambda$ is the hyperbolic DN map defined in \eqref{DN}, then $||\cdot ||_*$ is defined as the supremum of $||\Lambda ||_{H_0^1([0,T]\times \partial \Omega)}$ over all $f\in H_0^1([0,\epsilon]\times \partial \Omega)$ such that $||f||_{H_0^1([0,\epsilon]\times \partial \Omega)}=1$. This is well defined since one can extend $f$ as zero for $t > \epsilon$ and this extension of $f$ will be in $H_0^1([0,T]\times\partial \Omega)$ with $f|_{t=0}=0$. Moreover, because of uniqueness for (\ref{InitValProbSource}), for any $0< \epsilon< T$ and $0 < T' \leq T$ we have  
\begin{equation} \label{IneqForDNMAp}
|| \Lambda_P ||_{H_0^1([0,\epsilon]\times \partial \Omega) \to L^2([0,T']\times \partial \Omega)} \leq || \Lambda_P ||_{H_0^1([0,T]\times \partial \Omega) \to L^2([0,T]\times \partial \Omega)}.
\end{equation}


\subsection{Simple  metrics and geodesic X-ray transform}\label{SectionGenericSimpleMetricsXRayTransform}
In this section we define the set of generic simple metrics near which we can prove stability. 
\begin{defi} \label{SimpleMetric} We say that $\g$ is simple in $\Omega$, if $\partial \Omega$ is strictly convex w.r.t $\g$, and for each $x \in \bar{\Omega}$, the exponential map $\exp_x:\exp_x^{-1}(\bar{\Omega}) \to \bar{\Omega}$ is a diffeomorphism.
\end{defi}
\begin{remark}\label{RemarkExtendSimpleMetrics}
Note that since all requirements for simplicity are open, then a small $C^k(\overline{\Omega})$ perturbation of a simple metric in $\Omega$ is also simple, so we can extend $\g$ in a strictly convex neighborhood $\Omega_1 \supset \Omega$ as a simple metric in $\Omega_1$.
\end{remark}
Consider the Hamiltonian $H_\g(x,\xi) = (\g^{ij}\xi_i\xi_j)/2$ on $T^*(\Omega)$. We denote by $(x(t), \xi(t))$ the corresponding integral curves of $H_\g$ on the energy level $H_\g = 1/2$. We use the following parametrization of those bicharacteristics. Denote
\begin{equation}
\Gamma_-(\Omega_1) := \{(z,\omega)\in T^*\Omega_1; z \in \partial \Omega_1, |\omega |_\g =1,  \langle \omega, \nu(z) \rangle_\g <0 \},
\end{equation}
where $\nu(z)$ is the outer unit conormal to $\partial\Omega.$ Introduce the measure 
\[
d\mu(z,\omega) = |\langle\omega, \nu(z)\rangle_\g|dS_z dS_\omega \hspace*{0.2cm} \mbox{ on } \Gamma_-,
\]
where $dS_z$ and $dS_\omega$ are the surface measures on $\partial\Omega$ and $\{\omega \in T^*_x\Omega_1; |\omega|_\g = 1\}$ in the metric, respectively. Define $(x(t), \xi(t)) = (x(t;z,\omega), \xi(t;z,\omega))$ to be the bicharacteristics issued from $(z,\omega) \in \Gamma_-(\Omega_1).$

For any covector field $b = b_i dx^i$ we define the geodesic $X$-ray transform $I_gb$ by
\begin{equation}\nonumber 
(I_gb)(z,\omega) = \int b_i(x(t))\xi^i(t)  dt, \hspace*{1cm} (z,\omega)\in \Gamma_-,
\end{equation}
similarly for any symmetric 2-tensor $f=f_{ij}dx^idx^j$ the geodesic X-ray transform $I_gf$, which is a linearization of the boundary rigidity problem, is defined as
\begin{equation}\nonumber
(I_gf)(z,\omega) = \int f_{ij}(x(t))\xi^i(t)\xi^j(t)  dt, \hspace*{1cm} (z,\omega)\in \Gamma_-,
\end{equation}
\noindent
where $(x(t), \xi(t)) = (x(t;z,\omega), \xi(t;z,\omega))$ as above is the maximal bicharacteristics in $\Omega_1$ issued from $(z,\omega)$ and $\dot{x}^i = \g_{ij}\xi_j = \xi^i$. 

For a vector field $v$ let d$^s$ be the symmetric differential defined by $(\mbox{d}^sv)_{ij} = \frac{1}{2}(\nabla_iv_j + \nabla_iv_j)$ where $\nabla_i $ are the covariant derivatives. It is known that 
\[
I_\g\mbox{d}^sv =0 \mbox{ for any vector field } v \mbox{ with } v|_{\partial\Omega} = 0.
\]

\begin{defi} \nonumber 
We say that $I_\g$ is $s$-inyective in $\Omega$, if $Igf=0$ and $f \in L^2(\Omega)$ imply $f=\mbox{d}^sv$ for some vector field $v \in H_0^1(\Omega)$.
\end{defi}

\begin{defi}\label{GenericSimpleMetrics}
 Given $k\geq 2$, define ${\cal G}^k := {\cal G}^k(\Omega)$ as the set of all simple $C^k(\overline{\Omega})$ metrics  in $\Omega$ for with the map $I_\g$ is $s$-injective.
\end{defi}

By \cite{stefanov2005boundary}, for $k \gg 1$, ${\cal G}^k$ is an open and dense subset of all simple $C^k(\overline{\Omega})$ metrics  in $\Omega$, in particular, all real analytic simple metrics belong to ${\cal G}^k$. Also all metrics with a small enough bound on the curvature (in particular all negatively curved metrics) belong to ${\cal G}^k$, see \cite{sharafutdinov1994integral} and references in there.


\subsection{Invariance of the hyperbolic DN-map} \label{subsec:GaugeTransf}
We will consider the type of transformations that do not change the hyperbolic DN map. Let
\[
\varphi: \Omega \to \Omega
\]
be a diffeomorphism with $\varphi|_{\partial \Omega} =$Id, let us denote $x=\varphi(y)$ then for any $ v \in C_0^\infty  (\Omega)$
\[
(P(x,D)u,v)_{L^2} = (\varphi^*P(y,D) \varphi^*u,\varphi^*v)_{L^2}
\]
where the operators $\varphi^*P$ is as in (\ref{OperLocalCoor}) with 
\begin{equation}\label{DiffTrans}
\varphi^*\g, \varphi^*b \mbox{ and } \varphi^*q \hspace*{.6cm}\mbox{ instead of } \hspace*{.6cm}\g, b \mbox{ and } q
\end{equation}
respectively. Here $\varphi^*$ denotes the pull-back with respect to the metric $\g$. Since $v \in C_0^\infty(\Omega)$ is arbitrary we get 
\begin{equation} \label{ChangeVariablesInOperator}
P(x,D)u = 0 \iff \varphi^*P(y,D)\varphi^*u =0. 
\end{equation}

Making a change of variable in the hyperbolic DN map operator we see that for any $f\in C_0^1(\R_+ \times \partial \Omega)$ 
	\begin{align*}
\Lambda_P(f) &= \left. \nu^i \tfrac{\partial u}{\partial x^i} - \ii  \nu^i b_i u \; \right|_{(0,T)\times \partial \Omega} = \left. \nu^i \tfrac{\partial y^k}{\partial x^i} \tfrac{\partial \varphi^*u}{\partial y^k} - \ii  \nu^i \tfrac{\partial y^k}{\partial x^i} b_i \tfrac{\partial x^i}{\partial y^k} \varphi^*u \; \right|_{(0,T)\times \partial \Omega} \\
	& = \left. (\varphi^*\nu)^k \tfrac{\partial \varphi^*u}{\partial y^k} - \ii  (\varphi^*\nu)^k (\varphi^*b)_k \varphi^*u \; \right|_{(0,T)\times \partial \Omega} = \Lambda_{\varphi^*P}(f),
	\end{align*}
where the last equality follows from (\ref{ChangeVariablesInOperator}) and the fact that $\varphi|_{\partial \Omega} =$Id, hence $\Lambda_{\varphi^*P} = \Lambda_{P}$.

There is also another type of transformation that keep the DN map invariant, let 
	\begin{equation}\label{GaugeTransformationCovector}
\theta \in C^{\infty}(\Omega;\R), \theta|_{\partial \Omega} = 0.
	\end{equation}
We denote $P_{\theta}$ the operator as in  (\ref{OperLocalCoor}) where $b$ is replaced by 
	\begin{equation}\label{CovectorTrans}
	b_\theta = b - \mbox{d}\theta
	\end{equation}
we claim that $\Lambda_{P_\theta} =  \Lambda_{P}$. 
Similarly as before we can show that
\begin{equation} \label{OperatorGaugeTransf}
Pu =0 \iff P_\theta v = 0  
\end{equation}
where $u=e^{\ii \theta}v$, then we get that for any $f\in C_0^1(\R_+ \times \partial \Omega)$ since $e^{\ii \theta}|_{\partial \Omega} = 1$
\begin{align*}
\Lambda_P(f) &= \left. \nu^i \tfrac{\partial u}{\partial x^i} - \ii  \nu^i b_i u \; \right|_{(0,T)\times \partial \Omega}  = \left. \nu^i \tfrac{\partial e^{\ii \theta}v}{\partial x^i} - \ii  \nu^i b_i e^{\ii \theta}v \; \right|_{(0,T)\times \partial \Omega}  \\
	&=   \left. \nu^i \tfrac{\partial v}{\partial x^i} - \ii  \nu^i (b_i - \tfrac{\partial \theta}{\partial x^i}) v \; \right|_{(0,T)\times \partial \Omega} = \Lambda_{P_\theta}(f) 
\end{align*}
were the last equation follows by (\ref{OperatorGaugeTransf}). It is worth mention that transformations (\ref{DiffTrans}) and (\ref{CovectorTrans}) commute since $\varphi^*b - \varphi^*\mbox{d}\theta = \varphi^*b -  \mbox{d}\varphi^*\theta$.
Previous discussion justifies the definition of the set of operators, described by their coefficients,  that fix $\Lambda_P$ as:
\begin{equation} \label{GroupOfTransformationsByCoeficienta}
[(\g,b,q)] := \left\lbrace  (\varphi^*\g, \varphi^*b - \mbox{d}\theta, \varphi^* q):\begin{subarray}{l} \varphi \mbox{ \footnotesize is a diffeomorphism, } \varphi|_{\partial \Omega} = 1 \\  \theta \in C^{\infty}(\Omega;\C) \mbox{ \footnotesize and } \theta|_{\partial \Omega} = 0 \end{subarray}  \right\rbrace 
\end{equation}
We emphasize that the recovery of $\g$, $b$ and $q$ is done up to this class of transformations.


\subsection{Boundary normal coordinates}
In this section we will construct a diffeomorphism $\varphi$ that fixes the boundary, and a smooth function $\theta$ like in \eqref{GaugeTransformationCovector} to modify the covector field. We use the invariance of the hyperbolic DN map to modify $\g$, $b$ and $q$ near the boundary to have convenient computational properties. We sate the following well known proposition about boundary normal coordinates.
\begin{prop}\label{BoundaryNormalCoordinates}
Let $(\Omega, \g)$ be a Riemannian manifold with compact boundary $\partial \Omega$. There exist  $T>0$ and a neighborhood $N \subset \Omega$ of the boundary $\partial \Omega$ together with a diffeomorphism $\varphi : \partial \Omega \times  [0,T)     \to N$ such that: (i) $\varphi(x', 0) = x'$ for all $x' \in \partial \Omega$ and (ii) the unique unit-speed geodesic normal to $\partial \Omega$ starting at any $x_0 \in \partial \Omega$ is given by $x^n \mapsto \varphi(x_0, x^n)$. Moreover, by choosing a parametrization for the boundary near $x_0 \in \partial \Omega$, we can chose coordinates  $x = (x', x^n)$ near $x_0$, such that the line element $\dd s^2$ in the metric $\g$ is given by
\begin{equation}\nonumber
\dd s^s =   g_{\alpha\beta}\dd x'_\alpha x'_\beta + \dd x^n. 
\end{equation}  
where $\alpha, \beta$ vary from $1$ to $n-1$. In particular $x^n = \dd(x, \partial \Omega)$.
\end{prop}

For the two metrics $\g$ and $\tilde{\g}$, there exist $\varphi_1$ and $\varphi_2$ diffeomorphisms like in Proposition \ref{BoundaryNormalCoordinates}. Then 
\begin{equation}\label{Varphi}
\varphi := \varphi_1 \circ \varphi_2^{-1}
\end{equation} 
is a diffeomorphism near $\partial\Omega$ fixing $\partial\Omega$, and mapping the unit speed geodesics for $\g$ normal to $\partial\Omega$ into unit speed geodesics for $\tilde{\g}$ normal to $\partial\Omega$. By \cite{MR0117741} this diffeomorphism can be extended to a global diffeomorphism, let us called $\varphi$ again for its extension. 

\begin{figure}[h] 
 \centering
  \includegraphics[height=2.7in,keepaspectratio]{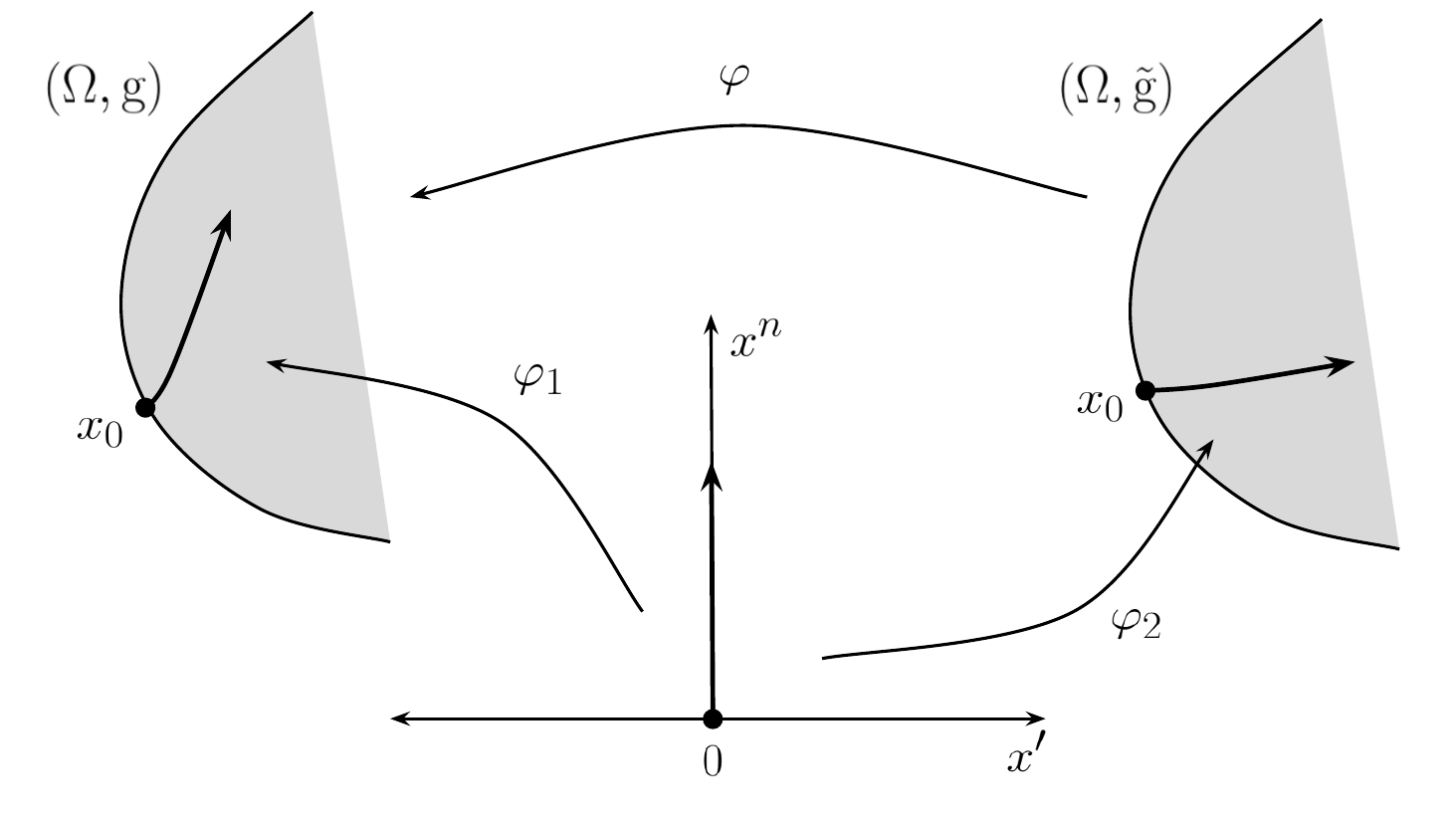}
  \caption{Boundary normal coordinates}
  \label{fig:BoundNormCoor}
\end{figure}

Notice that $\g$ and $\varphi^*\tilde{\g}$ have common normal geodesics to $\partial\Omega$, close to $\partial\Omega$, and moreover, if $(x',x^n)$ are boundary normal coordinates near a fixed boundary point for one of those metrics, they are also boundary
normal coordinates for the other metric, see Figure \ref{fig:BoundNormCoor}.

Now let $b$, $\tilde{b}$ be two covector fields in $\Omega$ as in Theorem (\ref{MainTheorem}). Use boundary normal coordinates $(x',x^n)$ near the boundary $\partial \Omega$ and let $b = b_idx^i$ and $\tilde{b}=\tilde{b}_idx^i$. Define 
\begin{align}\label{Theta}
\theta(x',x^n) & = \int_0^{x^n} (\varphi^*\tilde{b}_n- b_n)(x',t) dt
\end{align}
Extend $\theta$ so that $\theta \in C^\infty(\Omega;\C)$, notice $\theta|_{\partial \Omega} = 0$. If we use this $\theta$ as in (\ref{CoeficientsTransformation}), we get that near the boundary $\partial \Omega$
\begin{equation}\label{bnChange}
(\tilde{b}_*)_n  = (\varphi^*\tilde{b} - \mbox{d}\theta)_n = b_n.
\end{equation}
Now by the invariance of the hyperbolic DN map under this type of transformations from now on we  modify the initial coefficients $\tilde{g}$, $\tilde{b}$ and $\tilde{q}$ in the following way:
\begin{equation}
\tilde{\g} \mapsto \varphi^*\tilde{\g}, \hspace{1cm} \tilde{b}\mapsto \varphi^*\tilde{b} - \mbox{d}\theta \hspace{.5cm} \mbox{and}  \hspace{.5cm} \tilde{q} \mapsto \varphi^*\tilde{q}.
\end{equation} 

\begin{remark}[Transformation of coefficients] \label{TransformationDoesnotChangeAssumptions}
By construction of $\varphi = \mbox{Id} + O(\varepsilon_0)$ in $C^{k-2}$; therefore, the metric $\varphi_*\tilde{\g}$ also satisfies hypothesis (\ref{AprioriCond}) on Theorem \ref{MainTheorem}, with $k$ replaced by $k+3$ and for some $\varepsilon_0'>0$, such that $\varepsilon_0 \to 0$ as $\varepsilon_0' \to 0$. The same follow for $\varphi^*\tilde{b} - \mbox{d}\theta$ and $\varphi^*q$. Notice that here we are using the fact that $b$ is close to $\tilde{b}$ by hypothesis (\ref{AprioriCond}) on Theorem \ref{MainTheorem}. Hence, without loss of generality we denote $\varphi^*\tilde{\g}$, $\varphi^*\tilde{b} - \mbox{d}\theta$ and $\varphi^*\tilde{q}$  again by $\tilde{\g}$, $\tilde{b}$ and $\tilde{q}$. From now on we will follow this notation.
\end{remark}


\subsection{Interpolation Estimates}
This interpolation result is needed to use the apriori conditions in (\ref{AprioriCond}). We fix a simple metric $\g_0 \in C^k$ for $k \gg 1$. By remark \ref{RemarkExtendSimpleMetrics} extend $\g_0$ as a simple metric in some $\Omega_1 \supset \overline{\Omega}$. Let $\g, \tilde{\g}$ satisfy condition \eqref{AprioriCond} in Theorem \ref{MainTheorem}, then there exist $A>0$ and $\varepsilon_0 \ll 1$ such that
\be
\label{MetricEstimates}
||\g||_{C^k(\overline{\Omega})} + ||\tilde{\g}||_{C^k(\overline{\Omega})} \leq A, \hspace{0,5cm} ||\g-\g_0||_{C(\overline{\Omega})} +  ||\tilde{\g}-\g_0||_{C(\overline{\Omega})} \leq \varepsilon_0.
\ee
The first condition above is a typical compactness condition. We use standard interpolation estimate in Section 4.3.1 in \cite{MR1328645} to normalize the use of norm estimates. As  example, by using interpolation estimates we obtain
\be \label{interpolation}
||f||_{C^t(\overline{\Omega})} \leq C ||f||^{1-\theta}_{C^{t_1}(\overline{\Omega})}||f||^\theta_{C^{t_2}(\overline{\Omega})}
\ee
where $0 < \theta < 1$, $t_1 \geq 0$, $t_2\geq 0$, then  
\be
||\g-\g_0||_{C^t(\overline{\Omega})} + ||\tilde{\g}-\g_0||_{C^t(\overline{\Omega})} \leq C_\Omega\varepsilon_0^{(k-t)/k}
\ee
for each $t \geq 0$, if $k>t$. For our purposes, it is enough to apply (\ref{interpolation})  with $t, t_1$ and $t_2$ integers only. Estimates like (\ref{interpolation}) extend to compact manifolds with or without boundary.

\section{Boundary stability}

We will prove first stability at the boundary following \cite{stefanov2005stable}. We consider a highly oscillating solution of (\ref{InitProb}) asymptotically supported near a single geodesic transversal to $\partial \Omega$. We only need to work locally near a fixed point $x_0\in \partial \Omega$ and then use compactness of the boundary to glue the estimates. 


\subsection*{Geometrical optics solutions at the boundary}
\addcontentsline{toc}{subsection}{Geometrical optics solutions at the boundary}

Let $(x',x^n)$ be the boundary normal coordinates near $x_0$. Consider parameters $\lambda \in \R$, $\lambda \gg 1$ and $\omega' \in \R^{n-1}$. Let $\varepsilon$ and $T'$ such that $0< \varepsilon < T' \leq T$. Fix $t_0$ such that $0 < t_0 < \varepsilon$, and let $\chi\in C_0^\infty(\R_+\times \partial \Omega)$ be supported in a small enough neighborhood of $(t_0,x_0)$ of radius $\varepsilon' < \min(t_0, \varepsilon - t_0)$ and equals to 1 in a smaller neighborhood of $(t_0, x_0)$. We define $u$ to be the solution of (\ref{InitProb}) with $T'$ instead of $T$ and
\be \label{InitCond}
f= e^{\ii\lambda(t-x'\cdot \omega')}\chi(t,x').
\ee
One can get an asymtotic expansion for $u$ in a neighborhood $[0,T'] \times U \subset \R^+ \times \Omega$ of  $(t_0,x_0)$ by looking for $u$ of the form
\be \label{GeomOptSol}
u = e^{\ii\lambda(t-\phi(x,\omega))} \sum_{j=0}^{N}\lambda^{-j}A_j(t,x,\omega) + O(\lambda^{-N-1}) \quad\mbox{in}\quad C^1([0,T'];L^2(U)),
\ee
where $N \gg 0$ is fixed and $\omega =(\omega', \omega^n)$ is such that $\g^{ij}(x_0)\omega_i\omega_j = 1$ and $\g^{ij}(x_0) \nu_i(x_0)\omega_j < 0$. In $U$ the phase function solves the eikonal equation
\be \label{EikonalEquat}
\sum_{i,j=1}^n \g^{ij}\dfrac{\partial \phi}{\partial x^i} \dfrac{\partial \phi}{\partial x^j} = 1, \hspace{0.5cm} \phi|_{\partial \Omega} = x'\cdot \omega'.
\ee
With the extra condition $\frac{\partial\phi}{\partial \nu}|_{\partial \Omega} < 0$, \eqref{EikonalEquat} is uniquely solvable near $x_0$. In our coordinates, the metric $\g$ satisfies $\g_{in} = \g^{in} = \delta_{in}$ for $i=1,\ldots,n$ and $\partial/\partial\nu = - \partial/\partial x^n$. Notice that (\ref{EikonalEquat}) determines
\be \label{DefOmegaN}
\omega_n(x) = \frac{\partial \phi}{\partial x^n} (x) >0, \forall x \in U \cap \partial\Omega.
\ee
In $[0,T']\times U$ the principal part $A_0$ of the amplitude solves the first transport equation.
\be \label{PrincPart}
LA_0 = 0, \hspace{0.5cm} A_0|_{x^n =0} = \chi
\ee
and the lower order terms solve the transport equation
\be \label{TransEquat}
\ii LA_j = -(\partial_t^2 +P)A_{j-1}, \hspace{0.5cm} A_j|_{x^n=0}=0, \hspace{0.5cm} j\geq 1, 
\ee
where 
\be\label{TransEquatGeneralForm}
L = 2\partial_t + 2\sum_{i,j=1}^{n}\g^{ij}\dfrac{\partial \phi}{\partial x^j}\dfrac{\partial}{\partial x^i} - 2\ii \sum_{i,j=1}^n \g^{ij}b_i \dfrac{\partial \phi}{\partial x^j} + \Delta_\g\phi.
\ee

The construction of the solution $u$ guarantees that $A_j$, $j=1,\ldots, N$ are supported in a small neighborhood, depending on the size of $\sup \chi$, of the characteristics issued from $(t_0,x_0)$ in the codirection $(1,\omega)$. By the way we choose $\varepsilon'$, the term
\[
u_N := e^{\ii\lambda(t-\phi)}\sum_{j=0}^{N}\lambda^{-j}A_j
\]
in (\ref{GeomOptSol}) satisfies the zero initial condition in (\ref{InitProb}). Moreover, $u_N$ satisfies the boundary condition $u_N=f$ with $f$ as in (\ref{InitCond}), provided that $T'$ is such that $0 < T' -t_0$, is small enough so that the wave does not meet $\partial \Omega$ again (if it does, we need to reflect it off the boundary, as in next section). Write 
\[
u = u_N + w.
\]

Then $w = w_t = 0$ for $t=0$, and $w|_{(0,T')\times \partial \Omega} =0$ and $(\partial_{t}^2 + P)w = e^{\ii \lambda(t-\phi)}\lambda^{-N}(\partial_t^2 + P)A_N$. By Lemma \ref{DirectProblem} we obtain (\ref{GeomOptSol}) with the estimate remainder in $C^1([0,T'];L^2(U))$. 


\subsection*{Stability of higher order derivatives at the boundary}
\addcontentsline{toc}{subsection}{Stability of higher order derivatives at the boundary}

In order to get stability at the boundary we are going to take advantage of the freedom that we have of choosing the initial co-direction $\omega(x)$ of the solution $u$ constructed in previous section. For that we need the following lemma that allows to use finitely many co-direction to recover stability estimates for the metric, the covector field and the potential. We postpone its proof to the end of this section.

\begin{lemma}\label{lemma2}
Let $(\Omega, \g)$ be a Riemannian $n$-compact manifold with continuous metric. Let $r >0$ and $({\cal V}, \psi)$ any chart containing $x_0 \in \Omega$. Consider the  functional
\[
L(\omega, h, b) = h^{ij}(x)\omega_i(x)\omega_j(x) + b^i(x)\omega_i(x),
\]
where $h = h^{ij}\partial_i\otimes\partial_j$ is a symmetric 2-vector field, $b = b^i \partial_i$ is a 1-vector field and $\omega(x) \in S^r_x (T\Omega) = \{\omega \in T_x(T\Omega): |\omega|_{\g(x)} = r \} $. There exist $V\subset {\cal V}$ open neighborhood of $x_0$ and $N = n(n+3)/2$ co-directions $\omega^1(x), \ldots, \omega^N(x) \in S^r_x (T\Omega),$ such that 
\[
\sum_{i,j}^n |h^{ij}(x)|^2 + |b^i(x)|^2 \leq C \max_{k = 1, \ldots, N}|L(\omega(x),h(x),b(x))|^2 \quad \mbox{for}  \quad x \in V,
\]
and $C>0$ independent of $h$, $b$ and $\omega^k$ for $k = 1, \ldots, k$.
\end{lemma}

The main result of this section is the following:
\begin{teor} \label{BoundaryStability} For any $\mu <1$, $m \geq 0$, there exist $k \gg 1$, such that for any $A >0$, if
\[
\g, \tilde{\g}, b, \tilde{b}, q, \tilde{q} \mbox{ are bounded in } C^k(\bar{\Omega}) \mbox{ by } A, 
\]
then there $\exists C>0$ depending in $A$ and $\Omega$, such that
\begin{description}
\item{(i)} $\sup_{x\in \partial \Omega,|\gamma| \leq m} |\partial^\gamma (\g - \varphi^*\tilde{\g} ) |\leq C ||\Lambda_P - \Lambda_{\tilde{P}} ||_*^{\mu/2^m} $

\item{(ii)}  $\sup_{x\in \partial \Omega,|\gamma| \leq m} |\partial^\gamma (b - (\varphi^*\tilde{b} - \mbox{d}\theta)) |\leq C||\Lambda_P - \Lambda_{\tilde{P}} ||_*^{\mu/2^{m+1}}$

\item{(iii)} $\sup_{x\in \partial \Omega,|\gamma| \leq m} |\partial^\gamma (q - \varphi^*\tilde{q} ) |\leq C||\Lambda_P - \Lambda_{\tilde{P}}||_*^{\mu/2^{m+2}}$
\end{description}
where $\varphi$  and $\theta$ are as in (\ref{Varphi}) and (\ref{Theta}), respectively. $C^k$ norms are taken as in Remark \ref{FixCoordinateForCkNorms}.
\end{teor}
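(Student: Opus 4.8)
\textbf{Proof proposal for Theorem \ref{BoundaryStability}.}

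The plan is to use the geometric optics solution $u=u_N+w$ constructed above, evaluate $\Lambda_P f$ on the boundary, and extract the full boundary jets of $g$, $b$ and $q$ by an inductive bootstrap on the order of the normal derivative, exploiting the freedom in the codirection $\omega$ via Lemma \ref{lemma2}. First I would compute $\Lambda_P f$ explicitly from \eqref{DN} and \eqref{GeomOptSol}: the conormal derivative hits the phase factor $e^{\ii\lambda(t-\phi)}$, producing a leading term of order $\lambda$ equal to $-\ii\lambda\,\partial_\nu\phi\,\chi\,e^{\ii\lambda(t-x'\cdot\omega')}$ on $(0,T')\times\partial\Omega$, where by \eqref{DefOmegaN} $\partial_\nu\phi=-\omega_n(x)$ and $\omega_n$ is determined by the eikonal equation \eqref{EikonalEquat}, hence by $g$ on $\partial\Omega$. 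Thus $\|(\Lambda_P-\Lambda_{\tilde P})f\|_{L^2([0,T']\times\partial\Omega)}$ controls, after dividing by the natural power of $\lambda$ coming from $\|f\|_{H^1_0}$, the difference $|\omega_n(x)-\tilde\omega_n(x)|$ for each admissible $\omega'$, uniformly on a small boundary patch $U\cap\partial\Omega$. The standard device (as in \cite{stefanov2005stable}) is: the $L^2\to L^2$ bound on the DN difference, combined with an a priori $C^k$ bound, gives by interpolation (cf. \eqref{interpolation}) an estimate for the top-order symbol in a weaker norm; choosing $\lambda$ as an appropriate negative power of $\|\Lambda_P-\Lambda_{\tilde P}\|_*$ balances the geometric-optics error term $O(\lambda^{-N-1})$ from Lemma \ref{DirectProblem} against the gain, yielding a H\"older exponent. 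This is why the exponent degrades like $\mu/2^m$: each additional normal derivative costs another interpolation/balancing, halving the exponent.

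Next I would separate $g$ from $b$ and $q$. The point of Lemma \ref{lemma2} is that for a suitable finite family of codirections $\omega_1,\dots,\omega_N$, the values $T_{\omega_i}[h,\beta]=h_{ij}\omega_i^i\omega_i^j+\beta_j\omega_i^j$ with $N=n(n+2)/2$ determine $h$ and $\beta$ pointwise with a uniform constant; here $h$ will be built from $g-\tilde g$ and its derivatives and $\beta$ from $b-\tilde b$. Linearizing the eikonal equation in $g$ shows that $\omega_n(x)-\tilde\omega_n(x)$ is, to leading order, a quadratic form in $\omega'$ applied to $(g^{ij}-\tilde g^{ij})(x)$ on $\partial\Omega$; running over the $\omega_i$ and inverting gives (i) for $m=0$. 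For (ii), one goes to the next order in $\lambda$: the subprincipal term in $\Lambda_P f$ involves $A_0$ solving the transport equation \eqref{PrincPart} with operator $L$ from \eqref{TransEquatGeneralForm}, which contains the term $-2\ii\sum g^{ij}b_i\,\partial\phi/\partial x^j$; the linear-in-$b$ contribution appears as $\beta_j\omega^j$ after we use the normalization \eqref{bnChange} that $b_n=\tilde b_n$ near $\partial\Omega$ to kill the $n$-component ambiguity, and again Lemma \ref{lemma2} separates it from the (already controlled) metric contribution. For (iii), $q$ enters only through the second transport equation \eqref{TransEquat} for $A_1$, i.e. at order $\lambda^{-1}$ in $\Lambda_P f$; once $g$ and $b$ are known to the needed order on $\partial\Omega$, the $q$-dependent part is isolated, and since $q$ is scalar no further use of Lemma \ref{lemma2} is needed, only one more interpolation step—accounting for the extra factor $1/2$ in the exponent $\mu/2^{m+2}$.

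For the inductive step in $m$ (higher-order boundary jets), I would differentiate the eikonal and transport equations tangentially and normally on $\partial\Omega$: a normal derivative $\partial^m$ of $g$, $b$, $q$ shows up in the $\partial^m$-tangential derivative of the respective symbol equation, the lower-order normal derivatives being already controlled by the inductive hypothesis; one then chooses $N$ codirections as in Lemma \ref{lemma2} applied to $h=\partial^\gamma(g-\tilde g)$ and $\beta=\partial^\gamma(b-\tilde b)$, and each differentiation forces one more interpolation/balancing of $\lambda$, whence the factor $2^{-1}$ per step. The main obstacle I anticipate is bookkeeping the error terms uniformly: one must verify that the geometric-optics remainder $w$ and all its tangential boundary derivatives are $O(\lambda^{-N-1+\text{const}})$ with constants depending only on $A$ (the $C^k$ bound) and $\Omega$—this requires $N$ and $k$ large depending on $m$ and on how many derivatives of the amplitudes are differentiated, and it is where the ``$k\gg1$'' in the statement is consumed. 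A secondary technical point is that the DN difference is only assumed bounded $H^1_0([0,\epsilon]\times\partial\Omega)\to L^2([0,T]\times\partial\Omega)$, so one must keep all $f$'s supported in time in $[0,\epsilon]$ (guaranteed by the choice $0<\varepsilon'<\min(t_0,\epsilon-t_0)$ with $t_0<\epsilon$) and use \eqref{IneqForDNMAp} to pass to $[0,T']$; this forces $T'-t_0$ small, consistent with the requirement that the wave not return to $\partial\Omega$, so only the local-in-time, local-in-space geometric optics is ever needed.
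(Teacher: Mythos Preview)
Your proposal is correct and follows essentially the same route as the paper: evaluate the geometric-optics expansion of $\Lambda_P f$ term by term in $\lambda$, balance $\lambda$ against $\delta=\|\Lambda_P-\Lambda_{\tilde P}\|_*$ at each step (yielding the $\delta^{1/2^k}$ cascade), and invoke Lemma~\ref{lemma2} on the boundary to separate the symmetric $2$-tensor part from the covector part, then interpolate. One small correction: at the $q$-level the expression for $\partial_{x^n}A_1|_{\partial\Omega}$ simultaneously contains $\partial_{x^n}^2 g$, $\partial_{x^n} b$ and $q$, so Lemma~\ref{lemma2} is still used (on $\partial_{x^n}^2 g^{\alpha\beta}$ and $\partial_{x^n} b_\alpha$), and the paper isolates the scalar contributions---the $\det g$ term and $q$---by first setting $\omega'=0$ before applying Lemma~\ref{lemma2} to the remaining quadratic-plus-linear piece in $\omega'$; you should build that $\omega'=0$ step into your argument.
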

\begin{proof}

By Remark \ref{TransformationDoesnotChangeAssumptions}, without loss of generality, we denote $\varphi^*\tilde{\g}$, $\varphi^*\tilde{b} - \mbox{d}\theta$ and $\varphi^*\tilde{q}$ again  by $\tilde{\g}$, $\tilde{b}$ and $\tilde{q}$. To simplify notation let $\delta = ||\Lambda_P - \Lambda_{\tilde{P}}||_*$. In this proof we will denote $C$ by various constants depending only on $\Omega, A$ and the choice of $k\gg 1$ in Theorem \ref{MainTheorem}.  Using local boundary normal coordinates as before, let $I \times V \subset \R_+ \times \partial\Omega$ be a neighborhood  of $t=t_0$, $(x',x^n) = (0,0)$, where $\chi =1$. Since $\frac{\partial}{\partial \nu} = - \frac{\partial}{\partial x^n}$ and by Lemma \ref{DirectProblem}, we have
\begin{align} \label{DNMapBoundary}
\Lambda_P f &= e^{i\lambda(t-x\cdot \omega)} \left(i\lambda \frac{\partial \phi}{\partial x^n} - \sum_{j=0}^{N} \lambda^{-j} \left(\frac{\partial }{\partial x^n}  - ib_n\right)A_j \right)  + O(\lambda ^{-N-1}) 
\end{align}
in $L^2(I\times V)$; similarly for $\Lambda_{\tilde{P}} f$. Then
\begin{align*}
\frac{\partial \phi}{\partial x^n}-\frac{\partial \tilde{\phi}}{\partial x^n}  & = \frac{1}{i\lambda}e^{-i\lambda(t-x\cdot \omega)} (\Lambda_P f - \Lambda_{\tilde{P}}f     \\
 & \;\;\; +  \frac{1}{i\lambda} \left(\sum_{j=0}^{N}\lambda^{-j} \left( \frac{\partial A_j}{\partial x^n}- \frac{\partial\tilde{A_j}}{\partial x^n}\right)  -i\lambda^{-j} (b_jA_j - \tilde{b_j}\tilde{A_j})   \right) + O(\lambda^{-N-2})
\end{align*}
in $L^2(I\times V)$. Hence,
\begin{align*}
\left|\left| \frac{\partial \phi}{\partial x^n}-\frac{\partial \tilde{\phi}}{\partial x^n}\right|\right|_{L^2(V)}  & \leq \frac{C}{\lambda} \left(\delta ||f||_{H^1([0,\varepsilon]\times \partial \Omega)}  \right) + \frac{C}{\lambda}
\end{align*}
Notice that  $||f||_{H^1([0,\varepsilon]\times \partial \Omega)} \leq C\lambda$. Now take $\lambda \to \infty$ above to get
\be\label{PhaseEstimate}
\left|\left|\frac{\partial \phi}{\partial x^n} - \frac{\partial \tilde{\phi}}{\partial x^n} \right|\right|_{L^2(V)} \leq C\delta.
\ee
By the eikonal equation (\ref{EikonalEquat}), in $V\subset \partial \Omega$, we have 
\be \label{EikonalBoundary}
\omega_n(x) = \frac{\partial \phi}{\partial x^n} = \left(1-\sum_{\alpha,\beta =1}^{n-1}\g^{\alpha\beta}(x)\omega_\alpha\omega_\beta \right)^{1/2}, 
\ee
and similarly for $\tilde{\omega}_n = \partial \tilde{\phi}/\partial x^n$. Since $ ||(\omega_n)^2 - (\tilde{\omega}_n)^2||_{L^2(V)}  = O(\delta)$ by (\ref{PhaseEstimate}). Then we get
\[
||(\g^{\alpha\beta}(x) - \tilde{\g}^{\alpha\beta}(x)) \omega_\alpha \omega_\beta||_{L^2(V)} =  O(\delta). 
\]
and by Lemma \ref{lemma2} we have
\be\label{L2LocalEst1}
||\g-\tilde{\g}||_{L^2(V)}\leq C\delta.
\ee
Using compactness the manifold we extend (\ref{L2LocalEst1}) to the whole $\bo$ hence:
\be\label{Estimates1L2}
||\g-\tilde{\g}||_{L^2(\partial \Omega)} \leq C\delta. 
\ee
\noindent
We use interpolation estimates in Sobolev spaces and Sobolev embedding theorems, to get that for any $m\ge0$ and $\mu<1$
\be\label{Estimates1}
\|\g-\tilde \g\|_{C^m(\bo)}  \leq C\delta^\mu,
\ee
provided that $k\gg1$.

To estimate the difference of the first normal derivatives of $\g$ and $\tilde \g$ and the difference between $b$ and $\tilde{b}$ we use up to the principal part in (\ref{DNMapBoundary}), as before 
\begin{align*}
\frac{\partial A_0}{\partial x^n} - \frac{ \partial \tilde A_0}{\partial x^n} - i(b_nA_0-\tilde b_n \tilde A_0) &=-e^{-i\lambda (t-  x\cdot \omega )}(\Lambda_P(u)-\Lambda_{\tilde{P}} (\tilde u))+i\lambda \left(\frac{\partial \phi}{\partial x^n}-\frac{\partial \tilde \phi}{\partial x^n}\right) \\
& \hspace*{-2cm} -\sum_{j=1}^N \lambda^{-j}\left(\frac{\partial A_j}{\partial x^n}- \frac{\partial \tilde A_j}{\partial x^n} \right)+\sum_{j=1}^N i\lambda^{-j}(b_nA_j-\tilde b_n \tilde A_j)+O(\lambda^{-N-1})
\end{align*}
to obtain
\be
\Big\| \frac{\partial A_0}{\partial x^n} - \frac{ \partial \tilde A_0}{\partial x^n}-i(b_nA_0-\tilde b_n \tilde A_0)\Big\|_{L^2(V)} \le C\left( \lambda\delta + \delta + \lambda^{-1} \right).
\ee
The r.h.s above is minimized when $\lambda = \delta^{-1/2}$
\be \label{BoundPrincPart}
\Big\| \frac{\partial A_0}{\partial x^n} - \frac{ \partial \tilde A_0}{\partial x^n}-i(b_nA_0-\tilde b_n \tilde A_0)\Big\|_{L^2(V)} \leq C\delta^{1/2}.
\ee
The transport equation on $(t_0 - \varepsilon_1,t_0+\varepsilon_1)\times V$ implies
\[
2\omega_n\frac{\partial A_0}{\partial x^n}+\Delta_\g\phi-2i\sum_{i,j=0}^n \g^{ij}b_i\omega_j=0.
\]
Assuming the $\omega_n , \tilde{\omega}_n > \eta >0$ for small $\eta$ then by (\ref{EikonalEquat}) and since $\g^{in} = \delta_{in}$,
\beqno
2\omega_n\frac{\partial A_0}{\partial x^n} - 2i\omega_nb_n & = & \frac{\omega_n}{2 \det{\g}} 
\frac{\partial \det{\g}}{\partial x^n} + \frac{\partial^2 \phi}{\partial^2 x^n} -2i b^\beta \omega_\beta + R \\ 
 & = &  \frac{\omega_n}{2 \det{\g}} \frac{\partial \det{\g}}{\partial x^n} + \frac{1}{2\omega_n}\frac{\partial \g^{\alpha\beta}}{\partial x^n}\omega_\alpha \omega_\beta -2i b^\beta \omega_\beta + R,
\eeqno
where $R$ involves tangential derivatives of $\g$ that we can estimate by (\ref{Estimates1L2}) and $\partial \phi/\partial x^n$ that we can estimate by (\ref{PhaseEstimate}). Therefore by (\ref{BoundPrincPart}),
\begin{align}
\frac{\omega_n}{2 \det{\g}} 
\frac{\partial \det{\g}}{\partial x^n} - \frac{\tilde{\omega}_n}{2 \det{\tilde{\g}}} 
\frac{\partial \det{\tilde{\g}}}{\partial x^n} + \hspace{7cm} \nonumber \\
+ \frac{1}{2 \omega_n} \frac{\partial \g^{\alpha,\beta}}{\partial x^n} \omega_\alpha \omega_\beta - \frac{1}{2 \tilde{\omega}_n} \frac{\partial \tilde{\g}^{\alpha,\beta}}{\partial x^n} \omega_\alpha \omega_\beta  +\hspace{2cm} \label{E1}\\
   -2i (b^\beta \omega_\beta - \tilde{b}^\beta \omega_\beta) = O(\delta^{1/2}) \hspace{-1cm} \nonumber
\end{align}
in $L^2(V)$ for all  $\omega$'s as above. Setting $\omega'=0$,  we get
\[
\left|\left|\frac{1}{2 \det{\g}} 
\frac{\partial \det{\g}}{\partial x^n}- \frac{1}{2 \det{\tilde{\g}}} 
\frac{\partial \det{\tilde{\g}}}{\partial x^n}\right|\right|_{L^2(V)}  = O(\delta^{1/2}),
\]
and then since we can estimate the difference of the metric by (\ref{Estimates1L2}) we obtain
\[
\left|\left| \frac{\partial \det{\g}}{\partial x^n} - \frac{\partial \det{\tilde{\g}}}{\partial x^n} \right|\right|_{L^2(V)}  = O(\delta^{1/2}).
\]
This last equation together with (\ref{E1}) implies
\[
\left| \left|\frac{1}{2 \omega_n} \frac{\partial \g^{\alpha,\beta}}{\partial x^n} \omega_\alpha \omega_\beta - \frac{1}{2 \tilde{\omega}_n} \frac{\partial \tilde{\g}^{\alpha,\beta}}{\partial x^n} \omega_\alpha \omega_\beta -2i (b^\beta \omega_\beta - \tilde{b}^\beta \omega_\beta) \right|\right|_{L^2(V)} = O(\delta^{1/2}),
\]
since $\omega_n, \tilde{\omega}_n > \eta >0 $ and using again (\ref{PhaseEstimate}) we have 
\[
\left|\left| \left(\frac{\partial \g^{\alpha,\beta}}{\partial x^n} - \frac{\partial \tilde{\g}^{\alpha,\beta}}{\partial x^n}\right) \omega_\alpha \omega_\beta - (4\omega_n ib^\beta  - 4\omega_n i\tilde{b}^\beta) \omega_\beta\right|\right|_{L^2(V)} = O(\delta^{1/2}),
\]
Let us now take $\omega_n^2 = \tilde{\omega}^2_n = 1/2$, then 
\[
\left|\left| \left(\frac{\partial \g^{\alpha,\beta}}{\partial x^n} - \frac{\partial \tilde{\g}^{\alpha,\beta}}{\partial x^n}\right) \omega_\alpha \omega_\beta - (ib^\beta  -  i\tilde{b}^\beta) \omega_\beta \right|\right|_{L^2(V)} = O(\delta^{1/2}).
\]
and also $w'$ belongs to $\{\omega' \in T_{x'}\partial \Omega: |\omega '|_{\g'} =1 \}$ where $\g'$ is the induced metric of either $\g$ or $\tilde{\g}$ to $\partial \Omega$. Using now Lemma \ref{lemma2} and the fact that near the boundary $b_n - \tilde{b}_n = 0$, see (\ref{bnChange}), we have
\[
||b- \tilde{b}||_{L^2(V)} + \left|\left| \frac{\partial \g}{\partial x^n} -  \frac{\partial \tilde{\g}}{\partial x^n}  \right|\right|_{L^2(V)} \leq C\delta^{1/2}.
\]
Again by compactness we get,
\be \label{Estimates2L2}
||b- \tilde{b}||_{L^2(\partial \Omega)} + \left|\left| \frac{\partial \g}{\partial x^n} -  \frac{\partial \tilde{\g}}{\partial x^n}  \right|\right|_{L^2(\partial \Omega)} \leq C\delta^{1/2}.
\ee
As before using interpolation and Sobolev embeddings theorems
\be \label{Estimates2}
\Big\| \frac{\partial }{\partial x^n} (\g - \tilde \g)\Big\|_{C^m(\bo)}  +   \|  b- \tilde b\|_{C^m(\bo)} \le C \delta^{\mu/2}
\ee
\noindent
for any $m\ge0$ and $\mu<1$ as long as $k\gg1$.

To estimate the difference of the second normal derivatives of $\g$ and $\tilde{\g}$, first normal derivatives of $b$ and $\tilde{b}$ and the difference of $q$ and $\tilde{q}$ we use (\ref{DNMapBoundary}) up to the $\lambda^{-1}$ to get
\[ 
\left|\left| \frac{\partial A_1}{\partial x^n} - \frac{ \partial \tilde A_1}{\partial x^n} \right|\right|_{L^2(V)} \leq C\left( \lambda^2 \delta + \lambda\delta^{1/2} + \lambda^{-1}\right).
\]
Choose $\lambda = \delta^{-1/4}$ to obtain
\be\label{EstimatesSecondTerm}
\left|\left| \frac{\partial A_1}{\partial x^n} - \frac{ \partial \tilde A_1}{\partial x^n} \right|\right|_{L^2(V)} \leq C\delta^{-1/4}.
\ee
Using the equation $iLA_i = -(\partial_t^2 + P)A_0$ restricted to $I\times V$ we get
\[
2i\omega_n\frac{\partial A_1}{\partial x^n} = \frac{1}{2\det \g} \frac{\partial \det \g}{\partial x^n}\frac{\partial A_0}{\partial x^n} + \frac{\partial^2 A_0}{\partial^2 x^n} + q,
\]
this together with (\ref{PhaseEstimate}), (\ref{Estimates1L2}) and (\ref{EstimatesSecondTerm}) gives
\be
\left|\left|2\omega_n \frac{\partial^2 A_0}{\partial^2 x^n} - 2\tilde{\omega}_n\frac{\partial^2 \tilde{A}_0}{\partial^2 x^n} + 2\omega_n q- 2\tilde{\omega}_n\tilde{q} \right|\right|_{L^2(V)} = O(\delta^{1/4}).
\ee
Assuming $\omega_n, \tilde{\omega}_n > \eta >0$ for small $\eta$ and taking normal derivatives in (\ref{PrincPart}) and restricting it to $I\times V$ we have 
\beqno
2\omega_n\frac{\partial^2 A_0}{\partial^2 x^n}  & = & \frac{\omega_n}{2 \det{\g}} 
\frac{\partial^2 \det{\g}}{\partial^2 x^n} + \frac{\partial^3 \phi}{\partial^3 x^n} -2i \g^{\alpha\beta} \frac{\partial b_\alpha}{\partial x^n} \omega_\beta + R \\ 
 & = &  \frac{\omega_n}{2 \det{\g}} \frac{\partial^2 \det{\g}}{\partial^2 x^n} + \frac{1}{2\omega_n}\frac{\partial^2 \g^{\alpha\beta}}{\partial^2 x^n}\omega_\alpha \omega_\beta -2i \g^{\alpha\beta} \frac{\partial b_\alpha}{\partial x^n} \omega_\beta + R
\eeqno
where $R$ consists in terms that we can estimate by (\ref{Estimates1L2}) and (\ref{Estimates2L2}). Hence we get
\begin{align*}
\frac{\omega_n}{2 \det{\g}} 
\frac{\partial^2 \det{\g}}{\partial^2 x^n} - \frac{\tilde{\omega}_n}{2 \det{\tilde{\g}}} 
\frac{\partial^2 \det{\tilde{\g}}}{\partial^2 x^n} + 2\omega_nq-2\tilde{\omega}_n\tilde{q} +\hspace{7cm}\\
+ \frac{1}{2 \omega_n} \frac{\partial^2 \g^{\alpha,\beta}}{\partial^2 x^n} \omega_\alpha \omega_\beta - \frac{1}{2 \tilde{\omega}_n} \frac{\partial^2 \tilde{\g}^{\alpha,\beta}}{\partial^2 x^n} \omega_\alpha \omega_\beta  +\hspace{3cm} \\
   -2i \left(\g^{\alpha\beta} \frac{\partial b_\alpha}{\partial x^n} \omega_\beta - \tilde{\g}^{\alpha\beta} \frac{\partial \tilde{b}_\alpha}{\partial x^n} \omega_\beta \right) = O(\delta^{1/4}) \hspace{0cm}
\end{align*}
in $L^2(V)$. Again, setting $\omega' =0$ we have
\begin{equation} \label{Refe}
\left|\left|\frac{1}{2 \det{\g}} 
\frac{\partial^2 \det{\g}}{\partial^2 x^n} - \frac{1}{2 \det{\tilde{\g}}} 
\frac{\partial^2 \det{\tilde{\g}}}{\partial^2 x^n} + 2q-2\tilde{q} \right|\right|_{L^2(V)}= O(\delta^{1/4}). 
\end{equation}
Now since $\omega_n - \tilde{\omega}_n = O(\delta)$ then 
\[
\left|\left|\frac{1}{2 \omega_n} \left( \frac{\partial^2 \g^{\alpha,\beta}}{\partial^2 x^n} - \frac{\partial^2 \tilde{\g}^{\alpha,\beta}}{\partial^2 x^n} \right) \omega_\alpha \omega_\beta   -2i \left(\g^{\alpha\beta} \frac{\partial b_\alpha}{\partial x^n}- \tilde{\g}^{\alpha\beta} \frac{\partial \tilde{b}_\alpha}{\partial x^n}\right) \omega_\beta\right|\right|_{L^2(V)}  = O(\delta^{1/4}).
\]
We use similar reasoning as before. First we use Lemma  \ref{lemma2} and then compactness to get that $ \left|\left|\frac{\partial b}{\partial x^n}- \frac{\partial\tilde{b}}{\partial x^n}\right|\right|_{L^2(\partial \Omega)}$  and $ \left|\left| \frac{\partial^2 \g}{\partial^2 x^n} -  \frac{\partial^2 \tilde{\g}}{\partial^2 x^n}  \right|\right|_{L^2(\partial \Omega)}$ are $O(\delta^{1/4})$, this together with (\ref{Refe}) gives 
\be \label{Estimates3L2}
||q-\tilde{q}||_{L^2(\partial\Omega)} +  \left|\left|\frac{\partial b}{\partial x^n}- \frac{\partial\tilde{b}}{\partial x^n}\right|\right|_{L^2(\partial \Omega)} + \left|\left| \frac{\partial^2 \g}{\partial^2 x^n} -  \frac{\partial^2 \tilde{\g}}{\partial^2 x^n}  \right|\right|_{L^2(\partial \Omega)} \leq C\delta^{1/4}.
\ee
As before we get
\be \label{Estimates3}
||q-\tilde{q}||_{C^m(\partial\Omega)},  \left|\left|\frac{\partial b}{\partial x^n}- \frac{\partial\tilde{b}}{\partial x^n}\right|\right|_{C^m(\partial\Omega)}, \left|\left| \frac{\partial^2 \g}{\partial^2 x^n} -  \frac{\partial^2 \tilde{\g}}{\partial^2 x^n}  \right|\right|_{C^m(\partial\Omega)} \leq C\delta^{\mu/4}.
\ee
for $m>0$ and $\mu <1$. Proceeding by induction we prove the theorem.
\end{proof}

\begin{proof}[Proof of Lemma \ref{lemma2}]
First notice that by rescaling it is enough to proof the theorem for $r=1$. Let $\omega^1(x_0), \ldots,\omega^N(x_0)$ be like in Lemma 3.3 in \cite{MR2351370} related to $x_0$. Consider
\[
\omega^k(x) = \frac{\omega^k(x_0)}{|\omega^k(x_0)|_{\g}} \quad \mbox{for} \quad k = 1, \ldots, N. 
\]
By continuity of the metric, for any $\epsilon>0$, there exist a small neighborhood $U$ of $x_0$ were $|\omega^k(x_0)|_{\g} >1/2$ and $ |\omega^k(x) - \omega^k(x_0)|_{\g} < \epsilon$. Consider the linear transformation $L(x) : [h(x),b(x)]\mapsto [L(\omega^1(x), h(x) ,b(x)), \ldots, L(\omega^N(x), h(x),b(x))]$, notice that in Lemma 3.3  in \cite{MR2351370} the determination of $[h(x_0),b(x_0)]$ is done by inverting the linear transformation $L(x_0)$, whose inverse is also linear. Choose $\epsilon$ small enough so that $L(\omega^k(x),h(x), b(x))$ for $k = 1, \ldots, N$ are linear independent for all $x \in U$, were $U$ might be a smaller neighborhood of $x_0$, then we can take $C = \sup_{x\in U}||L^{-1}(x)||^{1/2}$. 
\end{proof}


\section{Interior Stability}

To obtain interior stability we first estimate the difference in the metrics and its derivatives following the argument in \cite{stefanov2005stable}. We use semigeodesical coordinates related to a point $z_0 \notin \bar{\Omega}$ but close enough so that simplicity assumption is still valid. We need to extend the geometrical optics solution \eqref{GeomOptSol} and reflect it at the boundary to satisfy initial conditions. We consider the phase function
\[
 \phi = x^n = d(x,z_0),
\] 
that is globally defined by simplicity assumption. The transport operator (\ref{TransEquatGeneralForm}) becomes 
\begin{equation} \label{TransEquatInterior}
L = 2\frac{\partial}{\partial t} + 2\frac{\partial}{\partial x^n}  +  \frac{1}{\sqrt{\det \g}} \frac{\partial \sqrt{\det \g} }{\partial x^n} - 2ib_n,  
\end{equation}
and can be solved explicitly. We get non-weighted integral transforms of $b$ and $q$ by  looking at the lower terms in the expansion of the solution. We then apply  H\"older stability results in \cite{stefanov2004stability} for vector fields and functions to estimate the difference of the covector fields and the potentials.


\subsection*{Modification of $\g$, $b$ and $q$ near the boundary}
\addcontentsline{toc}{subsection}{Modification of coefficients near the boundary}

For technical reasons we will need to make a $\delta$-small perturbation  of $\tilde{\g}, \tilde{b}$ and $\tilde{q}$ so that near the boundary they coincide with $\g, b$ and $q$. As in previous section, we denote $\delta = || \Lambda - \tilde{\Lambda}||_*$ and use notation in Remark \ref{TransformationDoesnotChangeAssumptions}.  

By Theorem \ref{BoundaryStability}, one has that for any $m >0$, there exist $ 0 < \mu < 1$ and $k\gg 1$, such that 					
\be\label{MetricApproximation}
\sup_{x\in\partial\Omega, |\gamma |\leq m} |\partial^\gamma(\g-\tilde{\g})|\leq C \delta^\mu.
\ee
Let $m>0$ be any integer and let $\chi \in C^\infty(\R)$, $\chi = 1$ for $t<1$ and $\chi = 0$ for $t>2$. Set
\be\label{ModificationMetric}
\tilde{\g}_1 = \tilde{\g} + \chi\left(\delta^{-1/M}\rho(x, \partial\Omega)\right)(\g-\tilde{\g}),  
\ee
where $M = 2m/\mu$
Using a finite Taylor expansion of $\g-\tilde{\g}$ up to $O((x^n)^M)$, where $x^n = \rho(x,\partial\Omega)$ and estimate (\ref{MetricApproximation}), we see that 
\begin{equation}\label{EstimateModificationOfMetric}
|| \tilde{\g}_1 - \tilde{\g}||_{C^m(\overline{\Omega})} \leq C\delta^{\mu - m/M} = C \delta^{\mu/2} 
\end{equation}

Is important to notice that $\tilde{\g}_1$ depends on $m$, but we will only need (\ref{ModificationMetric}) for large fixed $m$. In particular, the estimate above implies that $\tilde{\g}_1$ is also simple for $\delta\ll 1$. As before  without loss of generality we can assume that (\ref{MetricEstimates}) are still true for $\g$ and $\tilde{\g_1}$. We extend $\g$ and $\tilde{\g_1}$ in the same way as simple metrics in a neighborhood $\Omega_1 \supset \overline{\Omega}$. The advantage we have now is that
\be \label{MetricSameatBoundary}
\g = \tilde{\g_1} \mbox{ for } -1/C \leq \rho(x,\partial\Omega
) \leq \delta^{1/M}.
\ee
and hence by strictly convexity of the boundary there exist a constant $C$ such that 
\begin{equation} \label{BoundDistZeroAtNeigh}
(\rho_{\g} - \rho_{\tilde{\g}_1})|_{\partial \Omega 
\times \partial \Omega} = 0 \mbox{ if } \rho_\g(x,y) \leq C \delta^{1/(2M)},
\end{equation}
where $\rho_\g$ denotes the distance function with respect to $\g$. Moreover, using (\ref{EstimateModificationOfMetric}) we obtain 
\be \label{EstDistanceModif}
|\rho_{\tilde{\g}}(x,y) - \rho_{\tilde{\g}_1}(x,y)| \leq C\delta^{\mu/2}\;\; \forall x, y \in \overline{\Omega}.
\ee
We define similarly
\begin{align}\label{ModificationOfVectorAndPotencial}
\begin{split}
\tilde{b}_1&= \tilde{b} + \chi\left(\delta^{-1/M}\rho(x, \partial\Omega)\right)(b-\tilde{b}) \\
\tilde{q}_1&= \tilde{q} + \chi\left(\delta^{-1/M}\rho(x, \partial\Omega)\right)(q-\tilde{q}) 
\end{split}
\end{align}
and use Theorem \ref{BoundaryStability} to get that 
\begin{equation}\label{EstimateModificationOfVectorAndpotential}
|| \tilde{b}_1 - \tilde{b}||_{C^m(\overline{\Omega})} + || \tilde{q}_1 - \tilde{q}||_{C^m(\overline{\Omega})}  \leq C \delta^{1/(2M)} 
\end{equation}
and 
\begin{align}\label{VectAndPontSameNearBound}
\begin{split}
b = \tilde{b}_1 \mbox{ and } q = \tilde{q}_1 \mbox{ for } -1/C \leq \rho(x,\partial\Omega
) \leq \delta^{1/M}
\end{split}
\end{align}


\subsection*{Stable determination of the metric, the covector field and the potential}
\addcontentsline{toc}{subsection}{Stable determination of the metric, the covector field and the potential}

We first modify the mertic $\tilde{\g}, \tilde{b}$ and $\tilde{q}$ by $\tilde{\g}_1, \tilde{b}_1$ and $\tilde{q}_1$ as in (\ref{ModificationMetric}) and (\ref{ModificationOfVectorAndPotencial}). We use the same notation for the extensions. From now on objects below related to $\g$ are without tildes and those related to $\tilde{\g}_1$ are with a tilde above (no subscript 1). We proceed to the proof of the Main Theorem:

\begin{proof}[Proof of Theorem \ref{MainTheorem}]

Recall the notation $\delta = || \Lambda - \tilde{\Lambda}||_*$. It is enough to prove the proposition for $\delta \ll 1$. In what follows we denote by $\mu$, arbitrary constants $0<\mu<1$ that might decrease from step to step. Similarly we denote by $C$ various constants depending only in $\Omega$, $A$ and the choice of $k$ in Theorem \ref{MainTheorem} that could increase on each step.

Fix $x_0, y_0 \in \partial \Omega$. We will construct an oscillating solution related to $\g$ similarly to the one used to prove stability at the boundary, but in this case we want it to go all the way from $x_0$ to $y_0$. Consider the geodesic from $x_0$ to $y_0$, extended from $\Omega$ to $z_0 \in \Omega_1$ such that the geodesic segment $[z_0,x_0] \subset \Omega_1\backslash \Omega$, as illustrated by Figure \ref{fig:SolNearSemiGeodCoor}. Also, because of \eqref{MetricSameatBoundary}  we can assume that $\rho_\g(z_0,x_0) > 1/C >0$.

\begin{figure}[h] 
  \centering
  \includegraphics[height=2.7in,keepaspectratio]{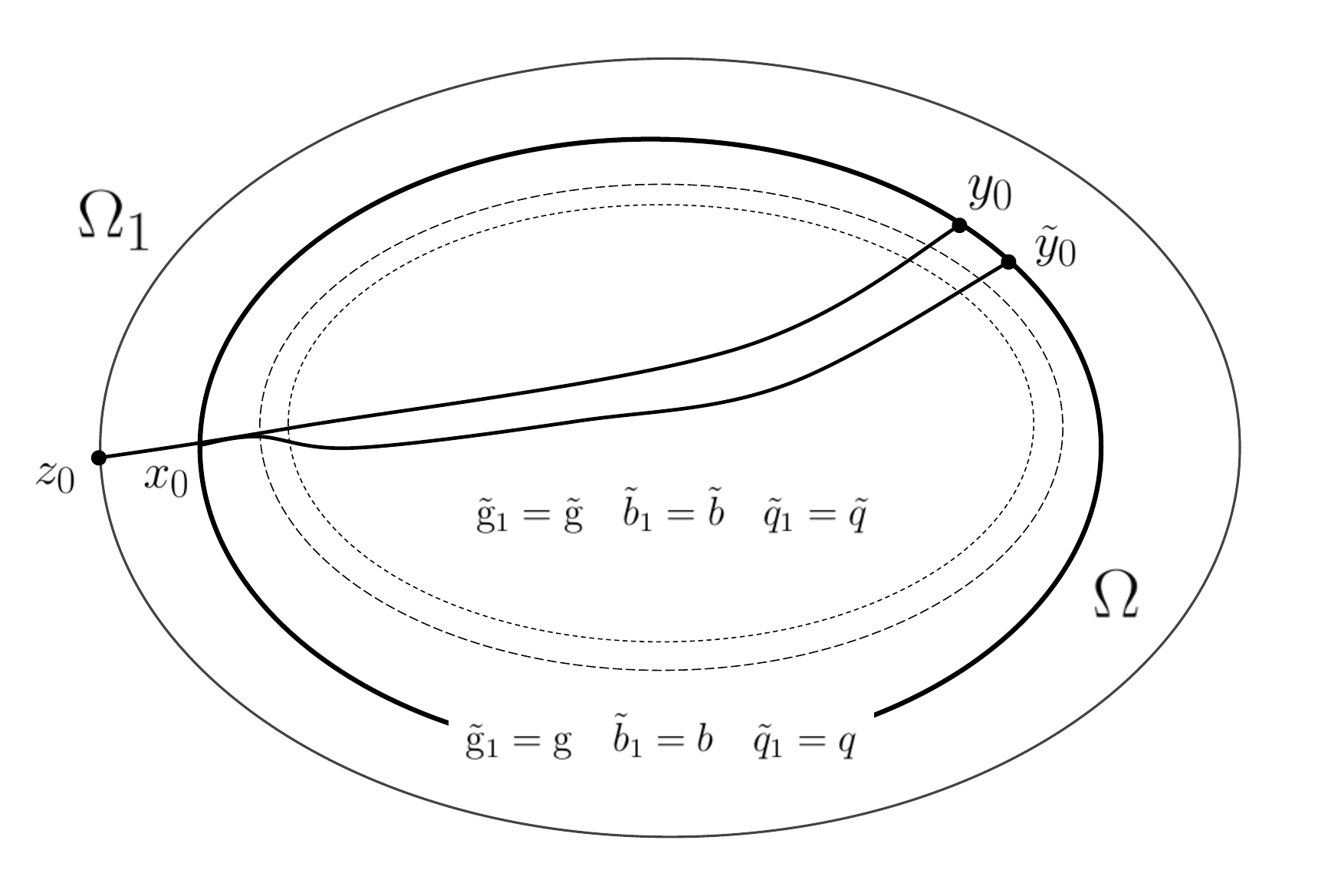}
  \caption{Solutions near semigeodesical coordinates.}
  \label{fig:SolNearSemiGeodCoor}
\end{figure}

Consider $(x',x^n)$ global semi-geodesic coordinates related to $z_0$,  where $x^n$ is the distance from $z_0$ to the point $p$ and $x'$ is a parametrization of the angular variable so that $x' = \mbox{const}$. are geodesics issued from $z_0$ with $x^n$ as the arc-lenght parameter, as in Lemma 4.2 in \cite{stefanov2005boundary}. We have that in these coordinates
\[
x^n = \rho_\g(p,z_0) \hspace*{1cm} \mbox{and} \hspace*{1cm} \g_{in} = \delta_{in}.
\]
in particula
Let $T > \mbox{diam}(\Omega)$, w.l.o.g. assume that $T- \mbox{diam}(\Omega)$ is small, see (\ref{IneqForDNMAp}). For $0< t_0 \ll 1$ define 
\begin{equation} \label{SetU}
U = \left\lbrace  (t,x) \in \R_+\times \partial \Omega; |t-t_0| + \rho(x,x_0) < \delta^{1/(2M)}/C_1 \right\rbrace 
\end{equation}
where $C_1 \gg 1$ will be specified later.  Choose a cut-off function $0 \leq \chi\leq  1, \chi \in C_0^\infty(\R_+\times \partial \Omega)$ such that $\sup \chi \subset U$, and $\chi =1$ in a set defined as $U$ but with $C_1$ replaced by $2C_1$. One can arrange that $|\partial_t\chi| + |\nabla \chi| \leq C\delta^{-1/(2M)}$.

Set $\phi(x) = \rho(x,z_0)$. Then, by simplicity assumption, since $z_0 \in \Omega_1$, we have that $\phi \in C^{k-1}(\overline{\Omega})$, and $\phi$ solves que eikonal equation
\[
\sum_{i,j=1}^\infty \g^{ij}\frac{\partial \phi}{\partial x^i}\frac{\partial \phi}{\partial x^j} = 1.
\]
Now we construct a solution $u$ of (\ref{InitProb}) for 
\[
f =  e^{-\ii \lambda(t-\phi)}\chi.
\]
We need to reflect the solution ones it reaches the boundary to get the zero boundary condition, so the solution $u$ is the sum of the incident wave and the reflected wave, $u = u^{\inc} + u^{\refl}$ with 

\[
u^{\inc}(t,x;\lambda) = e^{-\ii \lambda (t- \phi(x))}(A^{\inc}_0(t,x) + \lambda^{-1}A^{\inc}_1(t,x) +  R^{\inc}(t,x;\lambda)),
\]
\[
u^{\refl}(t,x;\lambda) = e^{-\ii \lambda (t- \hat{\phi}(x))}(A^{\refl}_0(t,x) + \lambda^{-1}A^{\refl}_1(t,x) +  R^{\refl}(t,x;\lambda)).
\]
where
\be\label{C2Estimates}
||R^{\inc} ||_{C^1([0,T];L^2(\Omega))} + ||R^{\refl}||_{C^1([0,T];L^2(\Omega))}\leq \frac{C}{\lambda^2},
\ee
This solutions satisfy the transport equations
\be \label{TransForInitWave}
LA^{\inc}_0 = 0, \hspace{0.2cm}   A^{\inc}_0|_U = \chi;\hspace{0.5cm}  
\ii LA^{\inc}_1 = -(\partial_t^2 +P)A^{\inc}_{0}, \hspace{0.2cm} A^{\inc}_j|_U=0 
\ee
\be \label{TranForRefWave}
LA^{\refl}_0 = 0, \hspace{0.2cm}   A^{\refl}_0|_V = -A^{\inc}_0|_V;\hspace{0.5cm}  
\ii LA^{\refl}_1 = -(\partial_t^2 +P)A^{\refl}_{0}, \hspace{0.2cm} A^{\refl}_1|_V= -A^{\inc}_1|_V 
\ee
where $L$ is as in \eqref{TransEquatInterior} and  $V \subset \R_+ \times \partial \Omega$ is the image of  $U$  under translations by all geodesics issued from $z_0$ and passing through  $U$.  The phase function $\hat{\phi}$ still solves the eikonal equation with boundary condition $\hat{\phi}|_V = \phi$ and is unique solution with gradient pointing towards the interior of $\Omega$ (the opposite solution in $\phi$). All these boundary conditions are assumed to be extended as zero in the rest of the boundary. 
 
We can solve the transport equations (\ref{TransForInitWave}) and (\ref{TranForRefWave}) in a neighborhood of the geodesic connecting $x_0$ and $y_0$ of size $O(\delta^{1/(2M)})$, and by simplicity assumption, this solutions can be extended all the way to $V$. If $C_1$ in (\ref{SetU}) is large enough, then $U$ and $V$ are disjoint sets
 
   Because of the strict convexity of $\partial \Omega$, each component is of size $O(\delta^{1/(2M)})$, at a distance bounded from below by the same quantity by assumption. Denote by $B(y,r)$ the ball centered at $y$ with radius $r$. Then $V$ contains the set $V_0 = V\cap B(y_0, \delta^{1/(2M)}/C^0)$, such that on $V_0$, we have $A^{\inc}_0 \geq 1/C >0$. Above, $C^0$ is chosen so that $V_0$ is contained in the translation of the set $\{ \chi =1 \}$ under geodesics issued from $z_0$.

By estimating $A_0\nabla \phi$ on $U$, and using the fact that $|\partial_t\chi| + |\nabla \chi| \leq C\delta^{-1/(2M)}$ we get
\begin{equation} \label{EstofSource}
||f||^2_{H^1([0,T]\times\partial\Omega)} \leq C\delta^{n/(2M)}\lambda^2 + C(\delta) 
\end{equation}
where the first constant is independent of $\delta$ (but it depends on $A$ and $\epsilon_0$ in (\ref{MetricEstimates})). Finally on $V_0$, we have 
\begin{equation}\label{DNMapInter1}
\Lambda_Pf =   e^{-\ii \lambda(t- \phi)}  \left( -2\ii\frac{\partial \phi}{\partial \nu} A^{\inc}_0 \lambda  + A^{\inc}_1  + \frac{\partial A_0 }{\partial \nu} \right)+ O_\delta(1/\lambda)
\end{equation}
in $L^2(V_0)$, where $A_0 = A^{\inc}_0 + A^{\refl}_0$. 

We construct a similarWe can s solution $\tilde{u}$ related to $\tilde{\g}$. First we construct a phase function $\tilde{\phi}$ as $\tilde{\phi} = \tilde{\rho}(x,z_0)$. Because of (\ref{MetricSameatBoundary}), $\tilde{\phi}$  solves the eikonal equation 
\begin{equation}
\sum_{i,j=1}^n \tilde{\g}^{ij}\dfrac{\partial \tilde{\phi}}{\partial x^i} \dfrac{\partial \tilde{\phi}}{\partial x^j} = 1, \hspace{0.5cm} \tilde{\phi}|_{U} = \phi.
\end{equation}
The other properties of $\tilde{u}$ are similar to those of $u$. Let $\tilde{V}_0$ be defined as above, but associated to $\tilde{\g}$, in $\tilde{V_0}$ we obtain an expression for $\Lambda_{\tilde{P}}\tilde{f}$ similar to one obtained in \eqref{DNMapInter1}.

Next we divide the proof in three parts. We first prove stability for the metric, then for the covector field and finally for the potential. In each step we use higher order terms in the asymptotic solution constructed above. 

\noindent
\textit{(a) Metric Stability:} We follow \cite{stefanov2005stable} and use boundary rigidity estimates in \cite{stefanov2005boundary}. Some simplifications to the argument are presented. Notice that
\begin{equation}\label{LowerBoundAmplitudeDN}
\left|\left| \frac{\partial \phi}{\partial x^n} A^{\inc}_0\right|\right|^2_{L^2(V_0)} \geq \delta^{n/(2M)}/C
\end{equation}
because $|V_0|\geq \delta^{n/(2M)}/C$.

We  claim that $V_0 \cap \tilde{V}_0 \neq \emptyset$. Arguing by contradiction, since $V_0$ and $\tilde{V}_0$ do not intersect and by (\ref{EstofSource}) we obtain
\[
\lambda^2\delta^{n/(2M)}/C - C(\delta) \leq ||\Lambda_{P}f - \Lambda_{\tilde{P}}f||^2_{L^2[0,T]\times \partial \Omega} \leq C \lambda^2\delta^{1+n/(2M)}  + C(\delta). 
\]
Dividing last inequality by $\lambda^2$ and taking $\lambda \to \infty$ we get a contradiction. Hence $ q \in V_0 \cap \tilde{V}_0$, using simplicity assumption and triangular inequality  we get
\be \label{DistanceEstimate1}
| \rho^2_\g(x_0,y_0) - \rho^2_{\tilde{\g}}(x_0,y_0)| = O(\delta^{\mu})
\ee
with some $0<\mu<1$ uniformly w.r.t. $x_0$ and $y_0$. In view of \eqref{BoundDistZeroAtNeigh} we can assume that $\rho_g(x_0,y_0) > C \delta^\mu $, for $0< \mu <1$. Using \eqref{DistanceEstimate1} and  (\ref{EstDistanceModif}) we have
\[
|| \rho_{\g} - \rho_{\tilde{\g}} ||_{C(\partial \Omega \times \partial \Omega)} \leq C \delta^\mu \mbox{ for } 0 < \mu < 1.
\]
We then apply estimate in Theorem 1.8 in \cite{stefanov2005boundary} to get
\begin{equation}\label{EstimateInteriorMetrics}
||\g-\varphi^* \tilde{\g}||_{C^2(\overline{\Omega})} \leq C \delta^\mu \mbox{ for } 0 < \mu <1.
\end{equation}
where $\varphi: \Omega \to \Omega$ is some diffeomorphism fixing the boundary. From now on we pull-back the the metric $\tilde{\g}$, the covector $\tilde{b}$ and the potential $\tilde{q}$ by the diffeomorphism $\varphi$. We denote them again by $\tilde{\g}$, $\tilde{b}$ and $\tilde{q}$. Notice that by modifing as in \eqref{ModificationOfVectorAndPotencial} we can assume that $ b = \tilde{b}$ and $q = \tilde{q}$ in a $O(\delta^{1/M})$ neighborhood of the boundary as in \eqref{VectAndPontSameNearBound}.

\noindent
\textit{(b) Magnetic Field Stability:} For this part we will use the stability of the principal part in the solution constructed in $(a)$ and stability of the 1-tensor geodesic X-ray transform. We will use sharp estimates of the 1-tensor X-ray transform obtained in \cite{stefanov2004stability}. Stability for this 1-tensor geodesic X-ray transform was previously know, see \cite{sharafutdinov1994integral} and references in there. From \eqref{EstofSource} we know
\[ 
||f||^2_{H^1([0,T]\times\partial\Omega)} \leq C\lambda^2 + C(\delta) 
\]
where the first constant is independent of $\delta$, but it depends on $A$ and $\epsilon_0$ in (\ref{MetricEstimates}). We also have 
\[
||\Lambda_{P}f - \Lambda_{\tilde{P}}f||^2_{L^2[0,T]\times \partial \Omega} \leq C \lambda^2\delta  + C(\delta).
\]
Using these two last equations, (\ref{DNMapInter1}) and part $(a)$  we obtain
\[
||A^{\inc}_0 - \tilde{A}^{\inc}_0||_{L^2([0,T] \times V_0\cap \tilde{V}_0)} \leq C\delta + C(\delta)/\lambda, 
\]
taking $\lambda \to \infty$ we get
\begin{equation} \label{EstimateAmplitudInterior}
||A^{\inc}_0 - \tilde{A}^{\inc}_0||_{L^2([0,T] \times V_0\cap \tilde{V}_0)} \leq C \delta 
\end{equation}
In this coordinate system, remember $\phi = x^n$, the the transport equation (\ref{TransEquatGeneralForm}) becomes
\begin{equation} \label{TransportEquationForPhaseFunctionInGlobalCoord}
\left(\frac{\partial}{\partial t} + \frac{\partial}{\partial x^n} + \frac{1}{2\sqrt{\det \g}} \frac{\partial \sqrt{\det \g} }{\partial x^n} - ib_n \right)A^{\inc}_0 = 0
\end{equation}
with same initial conditions as in (\ref{TransForInitWave}) and $b = b_i\mbox{d}x^i$. Using the method of characteristics or a change of variables we can  compute explicitly this solution and get
\[
A^{\inc}_0(t,x) = \chi((x',0),t-x^n) \left(\tfrac{\det \g(x',0)}{\det \g(x)}\right)^{1/4} \exp \left(\ii  \int_{0}^{x^n} b_n(x', s) ds \right).
\]
If we stay near $t=x^n$, then $\chi =1$ so 
\begin{equation} \label{AmplitudeInterior}
A^{\inc}_0(t,x) = \left(\tfrac{\det \g(x',0)}{\det \g(x)}\right)^{1/4} \exp \left(\ii  \int_{0}^{x^n} b_n(x', s) ds \right).
\end{equation}
Using (\ref{EstimateAmplitudInterior}), and (\ref{EstimateInteriorMetrics}) we get that 
\be
\left|\left|  \exp \left( \ii  \int_{0}^{x^n} b_n(x',s) - \tilde{b}_n (x',s) ds \right) -1  \right|\right|_{L^2([0,T] \times V_0\cap \tilde{V}_0)}   \leq C\delta^\mu. 
\ee
Remember that we modified the covector field to that $b-\tilde{b} = 0$ in a neighborhood of the boundary containing $\Omega_1\setminus\Omega$, then $b-\tilde{b}$ belongs to $C^1(\overline{\Omega})$ and by  (\ref{AprioriCond}) and (\ref{EstimateModificationOfVectorAndpotential}) we have $||b - \tilde{b}||_{C^1(\bar{\Omega})} \leq C (\epsilon_0 + \delta^{\mu/2})$ (i.e., arbitrarly small). Then we can use a Taylor expansion of $e^{ix}$ near zero to get
\be\label{IntegralBOverGeodesic}
\left|\left|  \int_{0}^{x^n} b_n(x',s) - \tilde{b}_n(x',s) ds \right|\right|_{L^2([0,T] \times V_0\cap \tilde{V}_0)}   \leq C\delta^\mu. 
\ee
This is the coordinate representation of the X-ray transform along the geodesic starting at $z_0$ and going all the way to $y_0$. Until know we had a fixed coordinate system associated to $z_0$. Since all constants $C$ are uniform with respect to $x_0$ and $y_0$ We can then shoot rays in all directions and we can move $z_0$ around all $\partial \Omega_1$. We note that since we modify the covector field near the boundary of $\Omega$ there are no non-zero tangential rays that are being integrated over. Hence, since $I_\g: L^2(\Omega_1) \to L^2(\Gamma_-(\Omega_1);d\mu)$ is bounded  by \cite{sharafutdinov1994integral} we have 
\begin{equation}
|| I_\g(b - \tilde{b})||_{L^2(\Gamma_-(\Omega_1);d\mu)} \leq C \delta^{\mu}.
\end{equation}
Using  interpolation and the compactness assumption of the metric and the potential we can estimate 
\[
|| I_\g(b - \tilde{b})||_{C^1(\Gamma_-(\Omega_1);d\mu)} \leq C \delta^{\mu}.
\]
By Theorem $4$ in \cite{stefanov2004stability} we know

\begin{equation}
 ||b-(\tilde{b} - \mbox{d}\theta) ||_{L^2(\Omega)} \leq C ||I^*_gI_\g (b-\tilde{b})||_{H^1(\Omega)}
\end{equation}
for $\theta \in C^\infty(\Omega,\R)$ such that $\theta = 0$ on $\Omega_1 \setminus\mbox{int}(\Omega)$ and with 
\[
(I^*_\g \psi(x))^i = \int_{|\xi|_\g=1} \xi^i \psi(\gamma_{x,\xi}) d\xi,
\]
were $\gamma_{x,\xi}$ denotes the maximal geodesic in $\Omega_1$ that passes through $x$ with codirection $\xi$. Hence 

\begin{equation}
 ||b-(\tilde{b} - \mbox{d}\theta) ||_{L^2(\Omega_1)} \leq C\delta^{\mu}
\end{equation}
for some $0<\mu<1$. Now by (\ref{EstimateModificationOfVectorAndpotential}) and since $\tilde{b} = \tilde{b}_1$ on $\Omega_1\setminus \Omega$ then 
\begin{equation}
 ||b-(\tilde{b} - \mbox{d}\theta) ||_{L^2(\Omega)} \leq C\delta^{\mu}.
\end{equation}
By interpolation and compactness we get 
\begin{equation}
 ||b-(\tilde{b} - \mbox{d}\theta) ||_{C^1(\Omega)} \leq C\delta^{\mu}.
\end{equation}
From now on change $\tilde{b}$ by $\tilde{b} - \mbox{d}\theta$, but we keep the same notation $\tilde{b}$.

\noindent
\textit{(b) Potential Stability:} Finally for the potential we use the next term in the expansion of the previous solution and stability estimates for the geodesic X-ray transform of functions in \cite{stefanov2004stability}. We have that the DN-map is given by \eqref{DNMapInter1} in $L^{2}([0,T] \times V_0)$ and similarly for $\Lambda_{\tilde{P}}$. Notice that since the metrics $\g$ and $\tilde{\g}$ are equal in a neighborhood of the boundary and because simplicity assumption, then all rays can be taken transversal to the boundary. Hence 
 \[
 \frac{\partial}{\partial \nu} = a(x) \frac{\partial}{\partial x^n} + c(x)\frac{\partial}{\partial \nu^{\perp}}
 \]
with $|a(x)| > 1/C$  in $V_0 \cap \tilde{V}_0$ , where $\partial_{\nu^{\perp}}$ is tangential to the boundary. Since we can estimate tangential derivatives we get 
 \[
 -2\ii \frac{\partial \phi }{\partial \nu}(A^{\inc}_1 - \tilde{A}^{\inc}_1) = \Lambda_Pf-\Lambda_{\tilde{P}}\tilde{f} + 2\ii a(x)\lambda (A^{\inc}_0 - \tilde{A}^{\inc}_0) - a(x)\frac{\partial}{\partial x^n}(A_0 - \tilde{A}_0)  + O(\lambda^{-1}) + O(\delta^\mu)
\]
in $L^2([0,T]\times V_0 \cap \tilde{V}_0)$. Now, using the explicit form of the amplitude (\ref{AmplitudeInterior}) to estimmate $A_0 - \tilde{A}_0$ we obtain
\[
\left|\left| - \ii  \frac{\partial \phi}{\partial \nu} (A^{\inc}_1 -\tilde{A}^{\inc}_1)\right|\right|_{L^2([0,T]\times V_0 \cap \tilde{V}_0)} \leq  C (\lambda\delta + 1/\lambda) + O(\delta^\mu),
\]  
taking $\lambda = \delta^{-1/2}$ in the previous inequality we get
\begin{equation} \label{InteriorEstimateForA1}
\left|\left| A^{\inc}_1 - \tilde{A}^{\inc}_1\right| \right|_{L^2([0,T]\times V_0 \cap \tilde{V}_0)} \leq C \delta^{\mu}.
\end{equation}
In our coordinates the transport equations \eqref{TransForInitWave} becomes 
\begin{equation}
\left(\frac{\partial}{\partial t} + \frac{\partial}{\partial x^n} + \frac{1}{2\sqrt{\det \g}} \frac{\partial \sqrt{\det \g}}{\partial x^n} - ib_n \right)A^{\inc}_1 = (\partial^2_t + P)A^{\inc}_0,
\end{equation}
where by (\ref{CoeficientsTransformationsForSeflfadjointness})  
\[
(\partial^2_t + P)A^{\inc}_0 = \partial^2_tA^{\inc}_0 -\triangle_gA^{\inc}_0 - qA^{\inc}_0 - |b|^2_gA^{\inc}_0 - \ii\left( 2 \langle b, \mbox{d}A^{\inc}_0\rangle_\g + (\mbox{Div}b)A^{\inc}_0\right).
\]
As before, solving by the method of characteristics, near $t = x^n$ we get 
\begin{align*}
A^{\inc}_1(x,t) & = \tfrac{1}{\eta(x)}  \int_0^{x^n} [ \eta(x',s) \triangle_gA^{\inc}_0 -(\det \g(x',0')^{1/2}q   - (\det \g(x',0))^{1/4}|b|_\g   \\& \hspace*{3cm}+  \ii  2\eta(x',s) \left( 2 \langle b, \mbox{d}A^{\inc}_0\rangle_\g + (\mbox{Div}b)A^{\inc}_0 \right)] ds  
\end{align*}
where 
\[
\eta(x',x^n) = (\det \g(x))^{1/4} \exp \left( -\ii \int_0^{x^n} b(x',s)ds\right)
\]
Now by (\ref{InteriorEstimateForA1}) and previous estimates we have
\[
\left|\left|\tfrac{(\det \g(x',0))^{1/4}}{\eta(x)} \int_0^{x^n}q_n(s) ds - \tfrac{(\det \tilde{\g}(x',0))^{1/4}}{\tilde{\eta}(x)} \int_0^{x^n}\tilde{q}_n(s) ds\right|\right|_{L^2([0,T]\times V_0 \cap \tilde{V}_0)} \leq C\delta^{\mu},
\]
since we have estimates for the metrics and the covector field this implies
\[
\left|\left|  \int_{0}^{x^n} q(x',s) - \tilde{q}(x', s) ds \right|\right|_{L^2([0,T]\times V_0 \cap \tilde{V}_0)}   \leq C\delta^\mu.
\]
Comparing this with (\ref{IntegralBOverGeodesic}), we argue as before and we obtain an invariant representation of the previous inequality
\[
|| I_\g(q - \tilde{q})||_{L^2(\Gamma_-(\Omega_1);d\mu)} \leq C \delta^{\mu}.
\]
Using  interpolation and the compactness assumption of the metric and the potential we get estimate  (remember we changed the potential so that $q = \tilde{q}$ near the boundary)
\[
|| I_\g(q - \tilde{q})||_{C^1(\Gamma_-(\Omega_1);d\mu)} \leq C \delta^{\mu}.
\]
We then apply a stability estimate for the geodesic X-ray transform as in Theorem $3$ in \cite{stefanov2004stability} and interpolation to obtain
\[
||q- \tilde{q}||_{C(\Omega)}\leq C\delta^{\mu}
\]
for some $0<\mu<1$.
\end{proof} 

\bibliographystyle{amsplain.bst}
\bibliography{MathBiblio}

\end{document}